\newtheorem{theorem}{{\bf Theorem}}[section]
\newtheorem{prop}[theorem]{{\bf Proposition}}
\newtheorem{proposition}[theorem]{{\bf Proposition}}
\newtheorem{lemma}[theorem]{{\bf Lemma}}
\newtheorem{question}[theorem]{{\bf Question}}
\newtheorem{corollary}[theorem]{{\bf Corollary}}
\newtheorem{remark}[theorem]{{\bf Remark}}
\newtheorem{exam}[theorem]{Example}
\newcommand{\A}{\mathbb{A}}
\newcommand{\R}{\mathbb{\bf R}}
\newcommand{\Z}{\mathbb{\bf Z}}
\newcommand{\D}{\mathbb{D}}
\newcommand{\identity}{1\hskip-2.5pt{\rm l}}
\newcommand{\N}{\mathbb{\bf N}}
\newcommand{\T}{\mathbb{T}^{2}}
\newcommand{\ham}{\mathrm{Ham}}
\newcommand{\SPH}{\mathbb{S}}
\newcommand{\cD}{{\mathcal{D}}}
\newcommand{\cH}{{\mathcal{H}}}
\newcommand{\cU}{{\mathcal{U}}}
\newcommand{\cV}{{\mathcal{V}}}
\newcommand{\dif}{\mathrm{Diff}_{0,c}}
\newcommand{\cO}{{\mathcal{O}}}
\newcommand{\tK}{{\tilde{K}}}
\newcommand{\tpsi}{{\tilde{\psi}}}
\newcommand{\Qed}{\hfill \qedsymbol \medskip}
\begin{document}

\title{On continuity of quasi-morphisms for symplectic maps \\
{\it \normalsize Dedicated to Oleg Viro on the occasion of his 60th birthday}}

\renewcommand{\thefootnote}{\alph{footnote}}

\author{\textsc Michael Entov$^{a}$,\ Leonid
Polterovich$^{b}$, Pierre Py\\ \\with an appendix by Michael
Khanevsky}

\footnotetext[1]{Partially supported by the Israel Science
Foundation grant $\#$ 881/06.} \footnotetext[2]{Partially
supported by the Israel Science Foundation grant $\#$ 509/07.}

\date{April 2009}


\maketitle

\begin{abstract} We discuss $C^0$-continuous homogeneous quasi-morphisms on
 the identity component of the group of compactly supported symplectomorphisms
 of a symplectic manifold.
Such quasi-morphisms extend to the  $C^0$-closure of this group
inside the homeomorphism group. We show that for standard
symplectic balls of any dimension, as well as for compact oriented
surfaces, other than
 the sphere, the space of such quasi-morphisms is infinite-dimensional.
 In the case of surfaces, we give a user-friendly
topological characterization of such quasi-morphisms. We also
present an application to Hofer's geometry on the group of
Hamiltonian diffeomorphisms of the ball.
\end{abstract}

\tableofcontents

\section{Introduction and main results}

\subsection{Quasi-morphisms on groups of symplectic maps}\label{subseq-aph}

Let $(\Sigma,\omega)$ be a compact connected symplectic manifold (possibly with a
non-empty boundary $\partial \Sigma$). Denote by $\cD(\Sigma,\omega)$ the identity
component of the group of symplectic $C^{\infty}$-diffeomorphisms
of $\Sigma$ whose supports lie in the interior of $\Sigma$. Write\footnote{We abbreviate $\cD(\Sigma)$ and $\cH(\Sigma)$ whenever
the symplectic form $\omega$ is clear from the context.}
$\cH(\Sigma,\omega)$ for the $C^0$-closure of $\cD(\Sigma,\omega)$
in the group of homeomorphisms of $\Sigma$ supported in the
interior of $\Sigma$. We always equip $\Sigma$ with a distance $d$ induced by a Riemannian metric on $\Sigma$, and view
the $C^0$-topology on the group of homeomorphisms of $\Sigma$ as
the topology defined by the metric ${\it dist} (\phi,\psi) =
\max_{x\in\Sigma} d(x, \psi^{-1}\phi(x))$.

The study of the algebraic structure of the groups
$\cD(\Sigma,\omega)$ was pioneered by Banyaga, see
\cite{Ban78,banyaga-book}. For instance, when $\Sigma$ is closed, he calculated the
commutator subgroup of $\cD(\Sigma,\omega)$ and showed that it is
simple. However, the algebraic structure of the groups
$\cH(\Sigma,\omega)$ is much less understood. Even for the
standard two-dimensional disc $\D^2$ it is still unknown whether
$\cH(\D^2)$ coincides with its commutator subgroup or not (see
\cite{bounemoura} for a comprehensive discussion). In the present
paper we focus on the following algebraic feature of the groups
$\cH(\Sigma,\omega)$.

Recall that a homogeneous quasi-morphism on a group $\Gamma$ is a
map $ \mu : \Gamma \to \R$ which satisfies the following two
properties:

\medskip
\noindent
\begin{itemize}
\item[{(i)}] There exists a constant $C(\mu) \ge 0$ such that
$\vert \mu(xy)-\mu(x)-\mu(y)\vert \le C(\mu)$ for any $x$, $y$ in
$\Gamma$. \item[{(ii)}] $\mu(x^{n})=n\mu(x)$ for any $x\in \Gamma$
and $n\in \Z$.
\end{itemize}

\medskip
\noindent Let us recall two well-known properties of homogeneous
quasi-morphisms which will be useful in the sequel: they are
invariant under conjugation, and their restrictions to abelian
subgroups are homomorphisms.

The space of all homogeneous quasi-morphisms is an important
algebraic invariant of the group. Quasi-morphisms naturally appear
in the theory of bounded cohomology and are crucial in the study
of the commutator length \cite{bav}. We refer to \cite{bav}, \cite{calegari}, \cite{ghys} or
\cite{kotschick-what} for a more detailed introduction to the
theory of quasi-morphisms.

Recently, several authors discovered that certain groups of
diffeomorphisms preserving a volume or a symplectic form carry
homogeneous quasi-morphisms, see
\cite{Barge-Ghys,bensimon,Entov,entpol,EP-toric,gg,ostrover,Polt-Montreal,py1}.
However, in many cases explicit constructions of non-trivial
quasi-morphisms on $\cD(\Sigma,\omega)$ require certain smoothness
in an essential manner. Nevertheless, as we shall show below, some
homogeneous quasi-morphisms can be extended from
$\cD(\Sigma,\omega)$ to $\cH(\Sigma,\omega)$.

Our first result deals with the case of the Euclidean unit ball
$\D^{2n}$ in the standard symplectic linear space.

\bigskip
\noindent
\begin{theorem} \label{theorem-highdim} The space of homogeneous quasi-morphisms
on $\cH(\D^{2n})$ is infinite-dimensional.
\end{theorem}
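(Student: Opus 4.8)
The plan is to deduce the statement from two facts. First, a real vector space is infinite-dimensional as soon as it contains, for every $k$, a linearly independent subset of cardinality $k$. Second, by the extension result recalled above, every $C^0$-continuous homogeneous quasi-morphism on $\cD(\D^{2n})$ extends to a homogeneous quasi-morphism on $\cH(\D^{2n})$, and linearly independent families extend to linearly independent families, since $\cD(\D^{2n})$ is $C^0$-dense in $\cH(\D^{2n})$. Hence it suffices to construct, for every $k\in\N$, linearly independent $C^0$-continuous homogeneous quasi-morphisms $\mu_1,\dots,\mu_k$ on $\cD(\D^{2n})$.

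Fix $k$, and choose pairwise disjoint closed symplectically embedded balls $B_1,\dots,B_k$ in the interior of $\D^{2n}$ --- possible for every $k$ once the radii are small enough --- each enlarged to an open ball $U_i$ so that $U_1,\dots,U_k$ remain pairwise disjoint; extension by the identity embeds each $\cD(U_i)$ and each $\cD(B_i)$ into $\cD(\D^{2n})$. I would construct, for each $i$, a $C^0$-continuous homogeneous quasi-morphism $\mu_i$ on $\cD(\D^{2n})$ that is \emph{localized in} $U_i$, meaning $\mu_i(\phi)=0$ whenever $\mathrm{supp}\,\phi\cap U_i=\varnothing$, and that is not identically zero on $\cD(B_i)$. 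Given such $\mu_i$, linear independence is immediate: in a nonzero combination $\sum_i c_i\mu_i$, choose $j$ with $c_j\ne0$ and restrict to $\cD(B_j)$; since $B_j\subset U_j$ is disjoint from every $U_i$ with $i\ne j$, all other terms vanish and $c_j\,\mu_j|_{\cD(B_j)}\not\equiv0$ survives. Thus the whole theorem reduces to producing one non-trivial $C^0$-continuous homogeneous quasi-morphism supported, in this sense, in a prescribed ball, the $\mu_i$ being obtained by carrying this construction into the disjoint balls $U_i$ so that the copies do not interfere.

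For the construction I would work from the Entov--Polterovich Calabi quasi-morphism on $\ham(\mathbb{CP}^n)$ --- obtained from the asymptotic spectral numbers of the semisimple quantum homology of $\mathbb{CP}^n$ --- after transplanting a ball containing $U_i$ into $\mathbb{CP}^n$ by a symplectic embedding, arranging the recipe so that it registers only the behaviour of $f$ inside $U_i$ (hence is localized) and still obeys the quasi-morphism axioms (which should follow from those of the ambient quasi-morphism, since the auxiliary modifications differ from the identity only by conjugations and by diffeomorphisms supported in displaceable subsets of $\mathbb{CP}^n$). The main obstacle, and where I expect essentially all the work to lie, is to make this compatible with $C^0$-continuity, a requirement that pulls against non-triviality: the most naive candidate, the bare restriction of the $\mathbb{CP}^n$-quasi-morphism to a displaceably embedded ball, is a nonzero multiple of the Calabi homomorphism, which is emphatically \emph{not} $C^0$-continuous --- a Hamiltonian supported in a ball of volume $\varepsilon$ with amplitude of order $\varepsilon^{-1}$ generates a map $\varepsilon$-close to the identity in the $C^0$-topology but with any prescribed Calabi invariant. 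So one must engineer the construction to annihilate this Calabi-type discontinuity while keeping the output non-trivial, and then establish a continuity estimate bounding $\mu_i(f)$ by a genuinely $C^0$-robust quantity --- plausibly via a fragmentation of a $C^0$-small $f$ into a controlled product of diffeomorphisms supported in small displaceable balls, combined with the Hofer-Lipschitz property and conjugation-invariance of the Entov--Polterovich quasi-morphism, so that each piece contributes negligibly. Once the $\mu_i$ are $C^0$-continuous, the linear-independence argument of the previous paragraph and the extension result finish the proof.
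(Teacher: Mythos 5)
There is a genuine gap, and it lies at the heart of your architecture: the plan to obtain linear independence from quasi-morphisms $\mu_i$ that are localized in pairwise disjoint \emph{small} balls $U_i$ and non-trivial on $\ham(B_i)$ for $B_i\subset U_i$. This requirement is in direct conflict with $C^0$-continuity. If $\mu_i$ is a homogeneous quasi-morphism bounded on a $C^0$-neighborhood $\cU$ of the identity (which is equivalent to continuity, by the Shtern criterion you invoke), then for any set $B$ of sufficiently small diameter the whole subgroup $\ham(B)$ lies in $\cU$, and a homogeneous quasi-morphism bounded on a subgroup vanishes on it. So each $\mu_i$ must be trivial on $\ham$ of all balls below some scale $\delta_i>0$, while you need it non-trivial on $\ham(B_i)$ with $B_i$ shrinking as $k$ grows. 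Producing continuous quasi-morphisms that detect non-triviality at arbitrarily small scales is precisely the open problem (tied to the simplicity of $\cH(\D^2)$) discussed in Section~\ref{simplicity-leroux}: for every known example one has $a(\phi)\ge \tfrac12$. Worse, the specific construction you propose cannot deliver even a single such $\mu_i$: after subtracting the Calabi homomorphism to kill the $C^0$-discontinuity, the resulting quasi-morphism vanishes on $\ham(U)$ for \emph{every} displaceable exact domain $U$, and your small balls $B_i$ are displaceable; hence the engineered $\mu_i$ is identically zero on $\cD(B_i)$ and your non-triviality hypothesis fails.

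The correct source of both non-triviality and infinite-dimensionality is global, not local. One embeds $\D^{2n}$ into a slightly larger ball $\D^{2n}(1+a)$ and uses the infinite-dimensional family of Calabi quasi-morphisms on $\ham(\D^{2n}(1+a))$ from \cite{BEP,entpol}: after subtracting the Calabi homomorphism these vanish on $\ham(U)$ for all displaceable $U\subset\D^{2n}(1+a)$, yet remain non-zero on $\ham(\D^{2n})$ because the \emph{whole} unit ball is non-displaceable in $\D^{2n}(1+a)$ by Gromov's packing inequality; their linear independence is inherited from the family on the larger ball, with no need for disjoint supports. Your closing remarks about continuity are in the right spirit but also slightly off target: the fragmentation one needs is not into ``small displaceable balls'' but into a \emph{bounded} number (three) of factors supported in fixed displaceable sets --- two half-balls $D_\pm=\D^{2n}\cap\{\pm q_n>0\}$ and a thin slab $\Pi(2\epsilon)\cap\D^{2n}$ --- which is exactly what makes $|\eta(f)|\le 2C(\eta)$ for $C^0$-small $f$ and hence gives continuity. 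Your reduction to $\cD(\D^{2n})$ via the extension result and the preservation of linear independence under extension are fine; it is the construction of the independent family that needs to be replaced.
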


\medskip
\noindent The proof is given in Section~\ref{sec-highdim}. Next,
we focus on the case when $\Sigma$ is a compact connected surface
equipped with an area form. Note that in this case $\cH(\Sigma)$
coincides with the identity component of the group of all
area-preserving homeomorphisms supported in the interior of
$\Sigma$, see \cite{oh} or \cite{sikorav}.

\medskip
\noindent
\begin{theorem} \label{theorem-main-1} Let $\Sigma$ be a compact
connected oriented surface other than the sphere $\SPH^2$,
equipped with an area form. The space of homogeneous
quasi-morphisms on $\cH(\Sigma)$ is infinite-dimensional.
\end{theorem}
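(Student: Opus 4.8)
The plan is to build the required quasi-morphisms on $\cD(\Sigma)$ out of quasi-morphisms on the braid groups of $\Sigma$, via the Gambaudo--Ghys construction, and then to check that the resulting quasi-morphisms are $C^0$-continuous; each such quasi-morphism then extends to the $C^0$-closure $\cH(\Sigma)$, and since restricting back to $\cD(\Sigma)$ recovers it, an infinite-dimensional family downstairs yields an infinite-dimensional family on $\cH(\Sigma)$. We may assume $\Sigma\ne\D^2$, since the disc is the case $n=1$ of Theorem~\ref{theorem-highdim}.

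Fix an integer $k$. Let $X_k$ be the space of ordered configurations of $k$ pairwise distinct points of the interior of $\Sigma$, a connected manifold carrying the probability measure induced by the $k$-fold product of the normalised area form $\omega/\vol(\Sigma)$, and let $B_k=\pi_1(X_k)$ be the pure braid group of $\Sigma$ on $k$ strands. Using a measurable selection, fix $z^0\in X_k$ and a measurable family $z\mapsto\gamma_z$ of paths from $z^0$ to $z$. For $\phi\in\cD(\Sigma)$, pick an isotopy $\{\phi_t\}$ in $\cD(\Sigma)$ from $\mathrm{id}$ to $\phi$; concatenating $\gamma_z$, the trajectory $t\mapsto(\phi_t(z_1),\dots,\phi_t(z_k))$ and the reverse of $\gamma_{\phi(z)}$ gives a loop at $z^0$ and hence a class $b_\phi(z)\in B_k$. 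For a homogeneous quasi-morphism $\bar\mu$ on $B_k$ define
\[
  \mu_{\bar\mu}(\phi)=\lim_{m\to\infty}\frac1m\int_{X_k}\bar\mu\bigl(b_{\phi^m}(z)\bigr)\,d\vol(z),
\]
with the $m$-fold concatenated isotopy used for $\phi^m$. A standard verification---using the cocycle identity for $z\mapsto b_\phi(z)$, the defect inequality for $\bar\mu$, and the conjugation invariance of homogeneous quasi-morphisms to absorb the auxiliary choices---shows that this limit exists, does not depend on the chosen isotopy, and defines a homogeneous quasi-morphism $\mu_{\bar\mu}$ on $\cD(\Sigma)$.

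The hard part will be the $C^0$-continuity of $\mu_{\bar\mu}$. By homogeneity and the defining inequality it suffices to bound $|\mu_{\bar\mu}(\phi)|$ by a quantity that tends to $0$ as the $C^0$-distance from $\phi$ to the identity does. It is here that the exclusion of $\SPH^2$ matters: for $\Sigma\ne\SPH^2$ the universal cover $\widetilde\Sigma$ is non-compact and admits a complete non-positively curved metric, so that, lifting the motion, a $C^0$-small isotopy moves points along paths staying uniformly close to geodesics; as a consequence the braid $b_{\phi^m}(z)$ has word length $O(m)$ for all but a set of configurations $z$ whose measure goes to $0$ with the $C^0$-size of $\phi$, while on that small set the braids contribute negligibly. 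Since $\bar\mu$ grows at most linearly with respect to the word metric on $B_k$, the two estimates together bound the integral, hence $\mu_{\bar\mu}(\phi)$. Making these two controls---on the measure of the set of ``braiding'' configurations and on the complexity of the braids produced---precise and compatible is the main technical obstacle, and is the only place where $\Sigma\ne\SPH^2$ is genuinely used.

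Finally, infinitely many linearly independent $\mu_{\bar\mu}$ must be produced. For $\Sigma\ne\SPH^2$ and $k$ large the space of homogeneous quasi-morphisms on $B_k$ is infinite-dimensional: when $\pi_1(\Sigma)$ is a non-abelian free group one may already take $k=1$ and invoke Brooks' construction; in general, for $k$ large $B_k$ acts with independent hyperbolic isometries on a $\delta$-hyperbolic space---for instance through the mapping class group of $\Sigma$ with $k$ marked points on the curve graph of the punctured surface---so the Bestvina--Fujiwara construction supplies an infinite linearly independent family $\bar\mu_1,\bar\mu_2,\dots$. It remains to check that the linear map $\bar\mu\mapsto\mu_{\bar\mu}$ does not collapse this family; this is a (somewhat technical but routine) computation: one tests the $\mu_{\bar\mu_i}$ against area-preserving diffeomorphisms that stir prescribed clusters of points along suitable patterns inside small discs of $\Sigma$, and arranges the resulting matrix of values to be non-degenerate. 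Extending the resulting $C^0$-continuous homogeneous quasi-morphisms from $\cD(\Sigma)$ to $\cH(\Sigma)$ finishes the proof.
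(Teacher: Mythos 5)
Your overall strategy --- Gambaudo--Ghys quasi-morphisms built from quasi-morphisms on (braid groups of) configuration spaces, followed by a $C^0$-continuity check and the extension to $\cH(\Sigma)$ --- is exactly the route the paper takes for the torus. But the continuity step, which you rightly single out as the hard part, rests on a claim that is false as stated, and the missing idea is precisely the paper's main point. You assert that for $\phi$ $C^0$-close to the identity the ``braiding'' configurations form a set of small measure and that ``on that small set the braids contribute negligibly.'' They do not: take $\phi$ supported in a disc $D$ of tiny diameter, equal there to a large power $N$ of a twist. Then the $C^0$-distance from $\phi$ to the identity is at most $\mathrm{diam}(D)$, the set of configurations lying entirely in $D$ has a \emph{fixed} positive measure of order $\mathrm{area}(D)^k$, and the braids produced there have word length of order $mN$, so $\frac1m\int\bar\mu(b_{\phi^m})$ restricted to that set is of order $N\cdot\mathrm{area}(D)^k\cdot\bar\mu(\text{twist})$, which is unbounded as $N\to\infty$ while $\phi$ stays in a fixed $C^0$-neighborhood of the identity. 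Concretely, the restriction of $\mu_{\bar\mu}$ to $\ham(D)$ is a multiple of the Calabi homomorphism (for the torus the paper quotes the exact formula $\widetilde\mu(f)=2\mu([a,b])\cdot {\it Cal}(f)$), and the Calabi homomorphism is $C^0$-discontinuous (Example~\ref{ex-cal-homo}). Your geodesic/word-length heuristic would therefore ``prove'' the continuity of quasi-morphisms that are in fact discontinuous.

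The repair has two parts, both of which your proposal lacks. First, one must restrict to those $\bar\mu$ that kill the relevant twist/commutator elements (for the torus, $\mu([a,b])=0$; this still leaves an infinite-dimensional space by \cite{gg}), so that $\mu_{\bar\mu}$ vanishes on $\ham(D)$ for all small discs $D$. Second --- and this is the real technical core of the paper --- one must show that vanishing on small discs \emph{implies} $C^0$-continuity; this is Theorem~\ref{criterion}, whose proof requires the $C^0$-small fragmentation theorem (Theorem~\ref{lemma-N0-small-discs}): a $C^0$-small Hamiltonian diffeomorphism of a surface is a product of a \emph{uniformly bounded} number of diffeomorphisms supported in small discs. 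Nothing in your sketch substitutes for that. Two smaller remarks: for the annulus and the other genus-zero surfaces with boundary it is not clear that the braid construction, after imposing the vanishing on twists, leaves anything nontrivial --- the paper instead uses Floer-theoretic Calabi quasi-morphisms there, as for $\D^{2n}$; and the final non-degeneracy of $\bar\mu\mapsto\mu_{\bar\mu}$, which you defer to a ``routine computation,'' is a genuine verification that the paper delegates to Section~6.2 of \cite{gg}.
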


\medskip
\noindent The proof is given in Section~\ref{examples}. This
result is new, for instance, in the case of the 2-torus. The case
of the sphere is still out of reach -- see
Section~\ref{sect-sphere} for a discussion. Interestingly enough,
for balls of any dimension and for the two-dimensional annulus,
all our examples of homogeneous quasi-morphisms on $\cH$ are based
on Floer theory. When $\Sigma$ is of genus greater than one, the
group $\cH(\Sigma)$ carries a lot of homogeneous quasi-morphisms,
and the statement of Theorem~\ref{theorem-main-1}
readily follows from the work of Gambaudo and Ghys \cite{gg}.

As an immediate application, Theorems \ref{theorem-highdim} and
\ref{theorem-main-1} yield that if $\Sigma$ is a ball or a closed
oriented surface other than sphere, then the stable commutator
length is unbounded on the commutator subgroup of $\cH(\Sigma)$.
This is a standard consequence of Bavard's theory \cite{bav}.

\subsection{Detecting continuity}\label{subsec-det}

A key ingredient of our approach is the following proposition, due
to Shtern \cite{Shtern}. It is a simple (nonlinear)
analogue of the fact that linear forms on a topological vector
space are continuous if and only if they are bounded in a
neighborhood of the origin.

\begin{proposition}[\cite{Shtern}]
\label{prop-general-top-group} Let $\Gamma$ be a topological group
and $\mu: \Gamma\to\R$ a homogeneous quasi-morphism. Then $\mu$ is
continuous if and only if it is bounded on a neighborhood of the
identity.
\end{proposition}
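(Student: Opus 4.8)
The plan is to prove the two directions separately; the forward one is immediate, and all the work is in the converse. For the forward direction, note that property (ii) (applied to $x=\identity$) forces $\mu(\identity)=0$, so if $\mu$ is continuous there is a neighborhood of $\identity$ on which $|\mu|<1$, i.e.\ $\mu$ is bounded near $\identity$. For the converse, assume $|\mu|\le C$ on some neighborhood $U$ of $\identity$. The strategy has two steps: first deduce that $\mu$ is continuous \emph{at} $\identity$, and then bootstrap this to continuity at an arbitrary point, using homogeneity to average away the bounded defect of $\mu$.

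For the first step the only ingredients are homogeneity and the (obvious) continuity of the $n$-th power map $g\mapsto g^{n}$ in a topological group. For each positive integer $n$ the set $V_{n}:=\{\,g\in\Gamma:g^{n}\in U\,\}$ is an open neighborhood of $\identity$, and for $g\in V_{n}$ property (ii) gives
\[
|\mu(g)|=\tfrac1n\,|\mu(g^{n})|\le \frac Cn .
\]
Since $C/n\to0$, this shows $\mu(g)\to0=\mu(\identity)$ as $g\to\identity$; note that the defect inequality (i) is not even used here.

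Now fix $h\in\Gamma$. The naive estimate $|\mu(g)-\mu(h)|\le|\mu(gh^{-1})|+C(\mu)$ coming straight from (i) is useless, as it can never be pushed below the defect $C(\mu)$ — this is the crux of the matter, and it is circumvented by passing to powers. Writing $g^{n}=(g^{n}h^{-n})\,h^{n}$ and applying (i) once, then dividing by $n$ and applying (ii) twice, one gets
\[
|\mu(g)-\mu(h)|\le\frac1n\bigl(|\mu(g^{n}h^{-n})|+C(\mu)\bigr).
\]
Given $\varepsilon>0$, one now chooses $n$ with $C(\mu)/n<\varepsilon/2$; then, by the continuity of $\mu$ at $\identity$ established above, there is a neighborhood $U_{0}$ of $\identity$ with $|\mu|<n\varepsilon/2$ on $U_{0}$, and by the continuity of $g\mapsto g^{n}h^{-n}$ (which sends $h$ to $\identity$) there is a neighborhood $W$ of $h$ with $g^{n}h^{-n}\in U_{0}$ for every $g\in W$. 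For such $g$ the displayed inequality gives $|\mu(g)-\mu(h)|<\varepsilon$, so $\mu$ is continuous at $h$. The only delicate point is the order of the choices: $n$ must be fixed first — large enough to kill the defect term — and only afterwards may one shrink the neighborhood of $h$ so that $g^{n}h^{-n}$ falls where $\mu$ is small relative to the now-fixed $n$.
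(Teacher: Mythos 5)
Your proof is correct and follows essentially the same route as the paper's: both pass to $n$-th powers, apply the defect inequality once to the decomposition of $g^{n}$ against $h^{n}$ (the paper writes $h^{p}=g^{p}f$ with $f$ in the small neighborhood, you write $g^{n}=(g^{n}h^{-n})h^{n}$), and divide by $n$ to make the error $\bigl(C(\mu)+\text{bound}\bigr)/n$ arbitrarily small on the open set pulled back under the continuous power map. Your explicit treatment of the trivial ``only if'' direction and the intermediate step of continuity at the identity are harmless elaborations of the same argument.
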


\begin{proof}
We only prove the ``if" part. Assume that $|\phi |$ is bounded by $K>0$
on an open neighborhood $\cU$ of the identity. Let $g \in \Gamma$. For each $p \in\N$ define
$$\cV_p(g) := \{ h\in \Gamma\ | \ h^p\in g^p \cU\}.$$
It is easy to see that $\cV_p(g)$ is an open neighborhood of $g$.
Pick any $h\in \cV_p(g)$. Then $h^p = g^p f$ for some $f\in \cU$.
Therefore
$$|\phi (h^p) - \phi (g^p) - \phi (f)|\leq C(\phi),$$
hence
$$|\phi (h) - \phi (g)| \leq \frac{C(\phi) + K}{p},$$
which immediately yields the continuity of $\phi$ at $g$.
\end{proof}

Let us discuss in more details the extension problem for
quasi-morphisms. The next proposition shows that $C^0$-{\bf
continuous} homogeneous quasi-mor\-phisms on $\cD(\Sigma)$ extend
to $\cH(\Sigma)$.

\medskip
\noindent
\begin{proposition}
\label{prop-general-top-group-1} Let $\Lambda$ be a topological
group and let $\Gamma \subset \Lambda$ be a dense subgroup. Any
{\bf continuous} homogeneous quasi-morphism on $\Gamma$ extends to
a continuous homogeneous quasi-morphism on $\Lambda$.
\end{proposition}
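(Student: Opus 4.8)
The plan is to extend $\mu$ to $\Lambda$ by continuity along the dense subgroup $\Gamma$, and then to check that the extension is again a continuous homogeneous quasi-morphism. Concretely, for $g\in\Lambda$ I would pick a net $(\gamma_\alpha)$ in $\Gamma$ with $\gamma_\alpha\to g$ (such a net exists by density) and set $\widetilde\mu(g):=\lim_\alpha\mu(\gamma_\alpha)$; one then has to show that this limit exists, is independent of the net, restricts to $\mu$ on $\Gamma$, and defines a continuous homogeneous quasi-morphism on $\Lambda$. As a preliminary, Proposition~\ref{prop-general-top-group} applied to $\mu$ on $\Gamma$ produces an open neighbourhood $\cU$ of the identity in $\Lambda$ and a constant $K$ with $|\mu|\le K$ on $\cU\cap\Gamma$; together with density and the defect inequality this already shows that $\mu(\gamma_\alpha)$ stays bounded whenever $\gamma_\alpha\to g$, which is what makes the limit candidate meaningful.

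The heart of the matter is the existence of $\lim_\alpha\mu(\gamma_\alpha)$. I would first bound its oscillation: set $a=\limsup_\alpha\mu(\gamma_\alpha)$ and $b=\liminf_\alpha\mu(\gamma_\alpha)$, and consider the net $(\alpha,\beta)\mapsto\gamma_\alpha^{-1}\gamma_\beta$ over the product directed set, which converges to $g^{-1}g=1$ by joint continuity of multiplication and inversion. Continuity of $\mu$ at the identity (equivalently, by Proposition~\ref{prop-general-top-group}, its boundedness near the identity) together with the quasi-morphism inequality $|\mu(\gamma_\beta)-\mu(\gamma_\alpha)-\mu(\gamma_\alpha^{-1}\gamma_\beta)|\le C(\mu)$ then forces $a-b\le C(\mu)$. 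The key point is to remove this defect constant: applying the same estimate to the net $\gamma_\alpha^{p}\to g^{p}$ and using homogeneity, $\mu(\gamma_\alpha^{p})=p\,\mu(\gamma_\alpha)$, one gets $p(a-b)\le C(\mu)$ for every $p\in\N$, hence $a=b$ and the limit exists. An essentially identical comparison — replacing $\gamma_\alpha^{-1}\gamma_\beta$ by $\gamma_\alpha^{-1}\delta_\beta$ for two nets $\gamma_\alpha\to g$ and $\delta_\beta\to g$ — shows the limit is independent of the net, so $\widetilde\mu$ is well defined, and using constant nets gives $\widetilde\mu|_\Gamma=\mu$.

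It then remains to transfer the structural properties. Passing to the limit in $|\mu(\gamma_\alpha\delta_\beta)-\mu(\gamma_\alpha)-\mu(\delta_\beta)|\le C(\mu)$ for nets $\gamma_\alpha\to g$, $\delta_\beta\to h$ shows $\widetilde\mu$ is a quasi-morphism with the same defect, while $\gamma_\alpha^{n}\to g^{n}$ and $\mu(\gamma_\alpha^{n})=n\,\mu(\gamma_\alpha)$ give homogeneity. For continuity I would invoke Proposition~\ref{prop-general-top-group} once more: if $g\in\cU$, any net $\gamma_\alpha\to g$ eventually lies in the open set $\cU$, so $|\mu(\gamma_\alpha)|\le K$ for large $\alpha$ and hence $|\widetilde\mu(g)|\le K$; thus $\widetilde\mu$ is bounded on a neighbourhood of the identity in $\Lambda$ and is therefore continuous. (Uniqueness of the extension, should one want it, is immediate, as two continuous functions agreeing on the dense set $\Gamma$ coincide.)

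The step I expect to be the main obstacle is exactly the elimination of the defect constant $C(\mu)$ in the oscillation bound: one application of the defect inequality only pins down $\limsup-\liminf$ up to $C(\mu)$, and it is the interplay with homogeneity — passing to $p$-th powers, where the defect stays bounded while the quantity of interest scales by $p$ — that closes the gap. A minor technical caveat is that $\Lambda$ need not be metrizable, so the argument is phrased with nets rather than sequences; in the situation relevant to this paper, where $\Lambda=\cH(\Sigma)$ carries the metrizable $C^{0}$-topology, ordinary sequences suffice throughout.
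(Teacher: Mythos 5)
Your proposal is correct and follows essentially the same route as the paper: define the extension as a limit of $\mu$ along elements of $\Gamma$ approaching $g$, and kill the a priori oscillation of size $C(\mu)+K$ by passing to $p$-th powers (which still cluster near $g^p$, where the defect inequality and boundedness of $\mu$ near the identity apply) and dividing by $p$ via homogeneity. The only difference is packaging — you use nets and a $\limsup$/$\liminf$ comparison where the paper chooses a particular sequence $h_k\in\cV_1(g)\cap\dots\cap\cV_k(g)$ and verifies the Cauchy property — which is a harmless variation (and indeed the more careful one if $\Lambda$ is not metrizable).
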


\begin{proof}  Since $\mu$ is continuous, it is bounded by a
constant $C>0$ on an open neighborhood $\cU$ of the identity in
$\Gamma$. Since $\cU$ is open in $\Gamma$, there exists $\cU'$,
open in $\Lambda$, so that $\cU=\cU'\cap \Gamma$. We fix an open
neighborhood $\cO$ of the identity in $\Lambda$ so that
$\cO^{2}\subset \cU'$ and $\cO=\cO^{-1}$. Given $g\in \Lambda$ and
$p\in\N$, define as before:
$$\cV_p (g) := \{ h\in \Lambda\ | \ h^p\in g^p \cO \}.$$
Pick a sequence
$\{h_k\}$ in $\Gamma$ so that each $h_k$ lies in $\cV_1
(g)\cap\ldots\cap \cV_k (g)$. For $k\ge p$ we can write
$h_{k}^{p}=g^{p}g_{k,p}$ ($g_{k,p}\in \cO$). If $k_{1},k_{2}\ge
p$, we can write
$$h_{k_{1}}^{p}=h_{k_{2}}^{p}g_{k_{2},p}^{-1}g_{k_{1},p}, \; \; \; \; \; g_{k_{2},p}^{-1}g_{k_{1},p}\in \cU.$$
Hence, we have the inequality
$$\left| \mu(h_{k_{1}})-\mu(h_{k_{2}})\right| \le \frac{C+C(\mu)}{p} \;\;\;\;\; (k_{1}, k_{2}\ge p),$$
and $\{ \mu (h_p)\}$ is a Cauchy sequence in $\R$. Denote its
limit by $\mu' (g)$. One can check easily that the definition is
correct and that for any sequence $g_i \in \Gamma$ converging to
$g \in \Lambda$ one has $\mu(g_i) \to \mu'(g)$. This readily
yields that the resulting function $\mu': \Lambda\to\R$ is a
homogeneous quasi-morphism extending $\mu$. Its continuity follows
from Proposition~\ref{prop-general-top-group}.\end{proof}

\medskip
\noindent In view of this proposition all we need for the proof of
Theorems~\ref{theorem-highdim} and  \ref{theorem-main-1} is to
exhibit non-trivial homogeneous quasi-morphisms on $\cD(\Sigma)$
which are continuous in the $C^0$-topology. This leads us to the
problem of continuity of homogeneous quasi-morphisms which is
highlighted in the title of the present paper.

\begin{remark} Note that all the concrete quasi-morphisms that we know on groups of diffeomorphisms are continuous in the $C^{1}$-topology. 
\end{remark}

\subsection{Calabi homomorphism and continuity on surfaces}
\label{sec-calabi-homom-discontinuous}

It is a classical fact that the Calabi homomorphism
 is not continuous in the
$C^0$-topology, see \cite{gg0}. We will discuss the example of the unit ball in $\R^{2n}$ and
then explain why the reason for the discontinuity of the Calabi
homomorphism is, in a sense, universal.

First, let us recall the definition of the group of Hamiltonian
diffeomorphisms of a symplectic manifold $(\Sigma,\omega)$.
Given a smooth function $F: \Sigma \times S^{1} \to \R$, supported
in $\text{Interior}(\Sigma) \times S^1$, consider the
time-dependent vector field $\text{sgrad} F_t$ given by
$i_{\text{sgrad} F_t} = -dF_t$, where $F_t(x)$ stands for
$F(x,t)$. The flow $f_t$ of this vector field is called {\it the
Hamiltonian flow generated by the Hamiltonian function} $F$ and
its time one map $f_1$ is called {\it the Hamiltonian
diffeomorphism generated by} $F$. Hamiltonian diffeomorphisms form
a normal subgroup of $\cD(\Sigma,\omega)$ denoted by
$\ham(\Sigma,\omega)$, or just by $\ham(\Sigma)$. The quotient $\cD(\Sigma)/\ham(\Sigma)$ is isomorphic to the group
$H^1_{comp}(\Sigma,\R)$. In particular, $\cD(\Sigma)=\ham
(\Sigma)$ for $\Sigma = \D^{2n}$ or for $\Sigma = \SPH^2$. We
refer to \cite{McD-Sal} for the details.

\medskip
\noindent
\begin{exam}\label{ex-cal-homo} {\rm
Let $\Sigma= \D^{2n}$ be the closed unit ball in $\R^{2n}$
equipped with the symplectic form $\omega = dp\wedge dq$. Take any
diffeomorphism $f \in \ham(\D^{2n})$ and pick a Hamiltonian $F$ generating $f$.
The value
$${\it Cal}(f):= \int_0^1 \int_{\D^{2n}} F(p,q,t) \; dpdq\;dt$$
depends only on $f$ and defines the {\it Calabi homomorphism} ${\it
Cal}:\cD(\D^{2n}) \to \R$ \cite{Calabi}.

Take a sequence of time-independent Hamiltonians $F_i$ supported
in balls of radii $\frac{1}{i}$ so that $\int_{\D^{2n}} F_i dp dq
= 1$. The corresponding Hamiltonian diffeomorphisms $f_i$
$C^0$-converge to the identity and satisfy ${\it Cal}(f_i)=1$. We
conclude that the Calabi homomorphism is discontinuous in the
$C^0$-topology.}
\end{exam}

In the remainder of this section, let us return to the case when
$\Sigma$ is a compact connected surface equipped with an area
form. Our next
result shows, roughly speaking, that for a quasi-morphism $\mu$ on
$\ham (\Sigma)$ its non-vanishing on a sequence of Hamiltonian
diffeomorphisms $f_i$ supported in a collection of shrinking balls
is the only possible reason for discontinuity. The next remark is
crucial for understanding this phenomenon. Observe that
$\text{support}(f^N) \subset \text{support}(f)$ for any
diffeomorphism $f$. Thus in the statement above non-vanishing
yields unboundedness: if $\mu(f_i) \neq 0$ for all $i$ then the
sequence $\mu(f_i^{N_i})= N_i\mu(f_i)$ is unbounded for an
appropriate choice of $N_i$.

\medskip
\noindent
\begin{theorem}\label{criterion}
Let $\mu : \ham(\Sigma)\to \R$ be a homogeneous quasi-morphism.
Then $\mu$ is continuous in the $C^{0}$-topology if and only if
there exists $a>0$ such that the following property holds: For any
disc $D\subset \Sigma$ of area less than $a$ the restriction of
$\mu$ to the group $\ham (D)$ vanishes.
\end{theorem}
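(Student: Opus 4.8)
The plan is to reduce each of the two implications to a geometric statement about area-preserving maps of surfaces, using throughout Proposition~\ref{prop-general-top-group} (which replaces ``$\mu$ is $C^0$-continuous'' by ``$\mu$ is bounded on a $C^0$-neighbourhood of the identity'') together with the two standard features of homogeneous quasi-morphisms recalled in Section~\ref{subseq-aph}: invariance under conjugation, and the iterated defect estimate $\bigl|\mu(g_1\cdots g_N)-\sum_{j=1}^{N}\mu(g_j)\bigr|\le (N-1)\,C(\mu)$, which follows from property~(i) by induction. For the implication ``continuity $\Rightarrow$ vanishing'': if $\mu$ is continuous, then $|\mu|\le K$ on a ball $\{\phi\in\ham(\Sigma):{\it dist}(\phi,\identity)<\varepsilon\}$ for suitable $K,\varepsilon>0$. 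I will use the classical fact that any two embedded closed discs of equal area in $\Sigma$ lie in one orbit of the $\ham(\Sigma)$-action --- one moves one disc onto the other by an ambient isotopy (isotopy extension), corrects it to be area-preserving (Moser), and kills the flux by a symplectic isotopy supported in the complement of the target disc, which is possible because a disc is null-homotopic, so its complement carries all of $H^1(\Sigma,\R)$. Consequently there is $a>0$, depending only on $\varepsilon$ and $\Sigma$, so that every disc $D$ with $\mathrm{area}(D)<a$ can be carried by some $\psi\in\ham(\Sigma)$ onto a round disc of $d$-diameter $<\varepsilon$. For $f\in\ham(D)$ and $n\in\N$ the map $\psi f^{n}\psi^{-1}$ is supported in $\psi(D)$, hence ${\it dist}(\psi f^{n}\psi^{-1},\identity)<\varepsilon$ and $|\mu(\psi f^{n}\psi^{-1})|\le K$; but conjugation-invariance and homogeneity give $\mu(\psi f^{n}\psi^{-1})=\mu(f^{n})=n\,\mu(f)$, so $n|\mu(f)|\le K$ for all $n$, whence $\mu(f)=0$.

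For the converse, assume $\mu|_{\ham(D)}\equiv 0$ for every disc $D$ with $\mathrm{area}(D)<a$. The crux is a \emph{bounded fragmentation lemma}: there exist $\varepsilon>0$ and an integer $N$, depending only on $\Sigma$, its area form, the metric and $a$, such that every $f\in\ham(\Sigma)$ with ${\it dist}(f,\identity)<\varepsilon$ can be written as a product $f=g_1 g_2\cdots g_N$ with each $g_j$ area-preserving and supported in some embedded disc $D_j\subset\Sigma$ of area $<a$. Granting this, the proof ends in a line: since $H^1_{comp}(D_j,\R)=0$, each $g_j$ automatically lies in $\ham(D_j)$, so $\mu(g_j)=0$; the iterated defect estimate then gives $|\mu(f)|\le (N-1)\,C(\mu)$ for all $f$ in the $\varepsilon$-ball about $\identity$, and $\mu$ is continuous by Proposition~\ref{prop-general-top-group}.

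The fragmentation lemma is the main obstacle, and the one place where the two-dimensionality of $\Sigma$ enters essentially --- in a surface, a small neighbourhood of a vertex, an edge or a triangle of a fine triangulation is a disc of arbitrarily small area. I would prove it by fixing, once and for all, a triangulation of $\Sigma$ fine enough that the closed star of every simplex lies in a disc of area $<a$, and then performing, area-preservingly, the Edwards--Kirby ``skeleton by skeleton'' fragmentation of a homeomorphism that is $C^0$-close to the identity: write $f=h_2 h_1 h_0$, where $h_0$ agrees with $f$ near each vertex and is the identity away from small discs about the vertices, $h_1$ turns $h_0^{-1}f$ into the identity near the whole $1$-skeleton, and $h_2=h_1^{-1}h_0^{-1}f$ is then supported in a disjoint union of slightly shrunk triangles; each $h_i$ is a product of factors with pairwise disjoint small-disc supports, so the number of factors is bounded by a fixed multiple of the number of simplices. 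The delicate point --- where real effort is needed, presumably via a relative version of Moser's argument at each stage --- is to perform each step area-preservingly: one is given an area-preserving embedding of a small disc into a slightly larger one, with complementary annulus of the right area, and must extend it to an area-preserving diffeomorphism equal to the identity near the outer boundary. I would state this fragmentation lemma separately (it may be cleanest to relegate its proof to an appendix), since it is logically independent of the quasi-morphism considerations above.
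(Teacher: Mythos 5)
Your overall architecture coincides with the paper's: the ``only if'' direction (boundedness on a $C^0$-ball via Proposition~\ref{prop-general-top-group}, transitivity of $\ham(\Sigma)$ on equal-area discs, conjugation invariance plus homogeneity) is exactly the argument of Section~\ref{sec-main-result-pf}; and you correctly reduce the ``if'' direction to a bounded $C^0$-small fragmentation statement, which is precisely Theorem~\ref{lemma-N0-small-discs}, after which the conclusion is the one-line defect estimate you give. The entire weight of the theorem therefore rests on your proof of the fragmentation lemma, and that is where there is a genuine gap.

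The Edwards--Kirby skeleton-by-skeleton scheme cannot be carried out area-preservingly at the $1$-skeleton stage of a triangulation. Suppose $g:=h_0^{-1}f$ is the identity near the vertices and you seek $h_1$, area-preserving, supported in a disjoint union of thin rectangles $U_e$ around the interior edges $e$, and agreeing with $g$ near each $e$. Since $h_1=\identity$ near $\partial U_e$, such an $h_1$ would carry the part of $U_e$ on one side of $e$ onto the part of $U_e$ on the corresponding side of $g(e)$, so area preservation forces the signed area enclosed between $e$ and $g(e)$ to vanish. That signed area is a flux-type quantity: for the time-one map of a Hamiltonian isotopy generated by $F_t$ it equals, up to sign, $\int_0^1\bigl(F_t(f_t(p))-F_t(f_t(q))\bigr)\,dt$, where $p,q$ are the endpoints of $e$; taking $F=\epsilon G$ with $G(p)\neq G(q)$ gives a $C^0$-arbitrarily small Hamiltonian diffeomorphism for which this is nonzero. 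So the required extension along an interior edge simply does not exist: your phrase ``with complementary annulus of the right area'' silently assumes an area-matching condition that holds automatically at the vertex (disc) stage but fails at the edge stage. This is exactly why the paper does not triangulate. It proceeds by induction on a handle decomposition, cutting only along properly embedded arcs spanning the $1$-handles --- whose endpoints lie where $f=\identity$, so the swept area vanishes \emph{because $f$ is Hamiltonian}; this is the hypothesis of Lemma~\ref{lemma-rectangles} --- and along circles bounding discs, where area preservation alone gives condition \eqref{eqn-area-condition}. Moreover, the small-area pieces are produced at the end not by a fine triangulation but by conjugating each factor into the standard disc and invoking Le Roux's fragmentation theorem for $\D^2$, a nontrivial external input your sketch does not replace. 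Your scheme could conceivably be repaired by redistributing the edge fluxes (as in Fathi's and Le Roux's arguments for homeomorphisms), but as written the edge step fails.
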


\medskip
\noindent Here by a disc in $\Sigma$ we mean the image of a smooth
embedding $ \D^2 \hookrightarrow \Sigma$. We view it as a surface
with boundary equipped with the area form which is the restriction
of the area form on $\Sigma$. The ``only if" part of the theorem
is elementary. It extends to certain four-dimensional symplectic
manifolds (see Remark~\ref{rem-inj} below). The proof of the ``if"
part is more involved and no extension to higher dimensions is
available to us so far (see the discussion in
Section~\ref{subsec-hdpro} below).

\medskip
\noindent
\begin{corollary}\label{criterion-1}Let $\mu : \cD(\Sigma)\to \R$ be a homogeneous quasi-morphism.
Suppose that
\begin{itemize}
\item[{(i)}] There exists $a>0$ such that for any disc $D\subset
\Sigma$ of area less than $a$ the restriction of $\mu$ to the
group $\ham (D)$ vanishes. \item[{(ii)}] The restriction of $\mu$
to each one-parameter subgroup of $\cD (\Sigma)$ is linear.
\end{itemize}
Then $\mu$ is continuous in the $C^{0}$-topology.
\end{corollary}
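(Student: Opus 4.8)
The plan is to deduce the corollary from Theorem~\ref{criterion} and Proposition~\ref{prop-general-top-group}, using the description of $\cD(\Sigma)$ as an extension of $\ham(\Sigma)$ by the finite-dimensional vector space $V:=H^1_{comp}(\Sigma,\R)$ via the flux homomorphism $\mathrm{Flux}\colon\cD(\Sigma)\to V$ with kernel $\ham(\Sigma)$. (When $V=0$, as for $\Sigma=\SPH^2$, the corollary is simply Theorem~\ref{criterion}.) By Proposition~\ref{prop-general-top-group} it is enough to bound $|\mu|$ on some $C^0$-neighborhood of the identity in $\cD(\Sigma)$. Note first that hypothesis (i) is precisely the criterion of Theorem~\ref{criterion} applied to the homogeneous quasi-morphism $\mu|_{\ham(\Sigma)}$, so $\mu|_{\ham(\Sigma)}$ is $C^0$-continuous; hence there exist a $C^0$-neighborhood $\cW$ of the identity in $\cD(\Sigma)$ and a constant $K>0$ with $|\mu|\le K$ on $\cW\cap\ham(\Sigma)$.

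The next step is to manufacture a convenient set-theoretic section of $\mathrm{Flux}$ out of one-parameter subgroups, so as to bring hypothesis (ii) into play. Fix compactly supported closed $1$-forms $\beta_1,\dots,\beta_m$ on $\Sigma$ whose classes form a basis of $V$; let $X_j$ be the symplectic vector field with $i_{X_j}\omega=\beta_j$ and let $\{\phi_j^s\}_{s\in\R}$ be its flow, a one-parameter subgroup of $\cD(\Sigma)$ with $\mathrm{Flux}(\phi_j^s)=s[\beta_j]$. By (ii) there are constants $c_j$ with $\mu(\phi_j^s)=c_js$ for all $s$. For $v=\sum_jv_j[\beta_j]\in V$ put $a_v:=\phi_1^{v_1}\cdots\phi_m^{v_m}$; then $\mathrm{Flux}(a_v)=v$, the assignment $v\mapsto a_v$ is continuous with $a_0=\mathrm{id}$, and the quasi-morphism inequality gives $|\mu(a_v)|\le\sum_j|c_j|\,|v_j|+(m-1)C(\mu)$, which is uniformly bounded as $v$ ranges over any fixed bounded subset of $V$.

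Now let $g$ lie in a small $C^0$-neighborhood of the identity, set $v:=\mathrm{Flux}(g)$ and $h:=g\,a_v^{-1}$; since $\ker\mathrm{Flux}=\ham(\Sigma)$ we get $h\in\ham(\Sigma)$. Shrinking the neighborhood, we may arrange that $v$ is so small that each coordinate $v_j$ lies in $[-1,1]$ and that $a_v$, and hence $h=g\,a_v^{-1}$, is $C^0$-close to the identity, so that $h\in\cW\cap\ham(\Sigma)$. The decomposition $g=h\,a_v$ then yields $|\mu(g)|\le|\mu(h)|+|\mu(a_v)|+C(\mu)\le K+\sum_j|c_j|+mC(\mu)$, a bound independent of $g$, and Proposition~\ref{prop-general-top-group} finishes the proof.

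The one point that is not purely algebraic, and which I expect to be the main obstacle, is the assertion used above that $\mathrm{Flux}(g)$ is small whenever $g$ is $C^0$-close to the identity, i.e.\ that $\mathrm{Flux}$ is $C^0$-continuous at the identity. I would establish it by fixing properly embedded curves $c_1,\dots,c_m$ in $\Sigma$ dual to the classes $[\beta_j]$ under Poincar\'e--Lefschetz duality and noting that, for $g$ close to the identity, the value of $\mathrm{Flux}(g)$ on $[c_i]$ equals the $\omega$-area of the thin region swept between $c_i$ and $g(c_i)$, which is $O(\mathrm{dist}(g,\mathrm{id}))$. Once this elementary fact is in place, the remainder is nothing more than bookkeeping with the quasi-morphism inequality and Theorem~\ref{criterion}.
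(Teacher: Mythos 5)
Your proof is correct and follows essentially the same route as the paper: bound $\mu$ on $\ham(\Sigma)$ near the identity via Theorem~\ref{criterion}, absorb the flux part using products of one-parameter subgroups on which (ii) makes $\mu$ linear, and conclude with Proposition~\ref{prop-general-top-group}. The only difference is that you make explicit (and sketch a proof of) the $C^0$-continuity of the flux at the identity, which the paper uses implicitly when asserting that $\cU\cdot\cV$ is a $C^0$-neighborhood of the identity in $\cD(\Sigma)$.
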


\medskip
\noindent Note that assumption (ii) is indeed necessary, provided
one believes in the axiom of choice. Indeed, assuming that
$\Sigma$ is not $\D^2,\SPH^2$ or $\T$, the quotient
$\cD(\Sigma)/\ham(\Sigma)$ is isomorphic to the additive group of
a vector space $V:=H^1_{comp}(\Sigma,\R)\neq \{0\}$. Define a
quasi-morphism $\mu: \cD(\Sigma) \to \R$ as the composition of the
projection $\cD(\Sigma) \to V$ with a {\it discontinuous}
homomorphism $V \to \R$. The homomorphism $\mu$ satisfies (i),
since it vanishes on $\ham(\Sigma)$, and it is obviously
discontinuous.

The  criterion  of continuity stated in Theorem~\ref{criterion}
and Corollary~\ref{criterion-1} are proved in
Section~\ref{sec-main-result-pf}. They will be used in
Section~\ref{examples} in order to verify $C^0$-continuity of a
certain family of quasi-morphisms on $\cD(\T)$ introduced in
\cite{gg} and explored in \cite{py}, which will enable us to
complete the proof of Theorem~\ref{theorem-main-1}.

\subsection{An application to Hofer's geometry}\label{subsec-app}

Here we concentrate on the case of the unit ball $\D^{2n} \subset
\R^{2n}$. For a diffeomorphism $f \in \ham(\D^{2n})$ define its
Hofer's norm \cite{Hofer90} as
$$||f||_{H}:= \inf \int_0^1 \left(\max_{z \in \D^{2n}} F(z,t) - \min_{z \in \D^{2n}} F(z,t) \right)\;
dt\;,$$ where the infimum is taken over all Hamiltonian functions
$F$ generating $f$. Hofer's famous result states that $d_H(f,g):=
||fg^{-1}||_H$ is a non-degenerate bi-invariant metric on
$\ham(\D^{2n})$. It is called {\it Hofer's metric.} It turns out
that the quasi-morphisms that we construct in the proof of
Theorem~\ref{theorem-highdim} are Lipschitz with respect to
Hofer's metric. Hence, our proof of Theorem~\ref{theorem-highdim}
yields:

\begin{proposition} The space of homogeneous quasi-morphisms on the
group $\ham(\D^{2n})$ which are both continuous for the
$C^{0}$-topology and Lipschitz for Hofer's metric is infinite
dimensional.
\end{proposition}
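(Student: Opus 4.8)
The plan is to observe that this proposition is a corollary of the proof of Theorem~\ref{theorem-highdim}. First I would recall what that proof delivers: for every $k\in\N$ it produces a $k$-dimensional space of homogeneous quasi-morphisms on $\cD(\D^{2n})=\ham(\D^{2n})$, each bounded on a $C^0$-neighbourhood of the identity, hence $C^0$-continuous by Proposition~\ref{prop-general-top-group} --- which is exactly what lets them be extended to $\cH(\D^{2n})$ via Proposition~\ref{prop-general-top-group-1}. So the only additional point to settle is that these same quasi-morphisms are Lipschitz for Hofer's metric $d_H$ on $\ham(\D^{2n})$.

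For that I would go back to the Floer-theoretic construction. Each of the quasi-morphisms in question is assembled, by finitely many linear operations and pullbacks under symplectic embeddings $\iota\colon\D^{2n}\hookrightarrow M$ of the standard ball into closed symplectic manifolds $M$, from homogenized spectral invariants on $\ham(M)$. Two facts then give the Lipschitz bound. First, every spectral invariant is $1$-Lipschitz for the Hofer metric (the classical Lipschitz property of spectral numbers), so any homogeneous quasi-morphism manufactured from spectral invariants is itself Lipschitz for Hofer's metric. Second, the extension-by-the-identity homomorphism $\iota_*\colon\ham(\D^{2n})\to\ham(M)$ does not increase Hofer's norm: if $F\colon\D^{2n}\times S^1\to\R$ is supported in the interior and generates $f$, then for each $t$ the function $F_t$ vanishes near $\partial\D^{2n}$, hence $0$ lies in its range, so transporting $F$ to $M$ and extending it by $0$ produces a Hamiltonian that generates $\iota_*(f)$ and whose oscillation at time $t$ is still $\max F_t-\min F_t$; passing to the infimum over generating Hamiltonians gives $||\iota_*(f)||_H\le||f||_H$, i.e. $\iota_*$ is $1$-Lipschitz for $d_H$. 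Since $\iota_*$ is a group homomorphism, the composite $\mu$ of a spectral quasi-morphism on $\ham(M)$ with $\iota_*$ obeys $|\mu(f)-\mu(g)|\le C\,||\iota_*(fg^{-1})||_H\le C\,d_H(f,g)$ for a suitable $C$, and a finite linear combination of such composites is again $d_H$-Lipschitz with constant the corresponding sum.

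Thus being $d_H$-Lipschitz holds for each of the constructed quasi-morphisms and is stable under the linear combinations involved, so the $k$-dimensional spaces produced in the proof of Theorem~\ref{theorem-highdim} consist entirely of quasi-morphisms that are simultaneously $C^0$-continuous and $d_H$-Lipschitz; letting $k\to\infty$ proves the proposition. I do not expect a serious obstacle here: the argument is essentially a bookkeeping rider on Theorem~\ref{theorem-highdim}, the one point needing a genuine (though short) verification being the Hofer monotonicity of $\iota_*$ above, and the only thing to be careful about being that the families of embeddings --- respectively of target manifolds and area parameters --- used in the proof of Theorem~\ref{theorem-highdim} to guarantee linear independence are literally those to which the Lipschitz estimate is applied, so that no rescaling intervenes between the two statements.
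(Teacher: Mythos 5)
Your proposal is correct and follows essentially the same route as the paper: the proposition is obtained as a rider on the proof of Theorem~\ref{theorem-highdim}, with the $C^0$-continuity coming from the fragmentation argument of Section~\ref{sec-highdim} and the Hofer--Lipschitz property coming from the spectral-invariant origin of the quasi-morphisms of \cite{BEP,entpol} together with the observation that extension by the identity does not increase Hofer's norm. The paper simply cites \cite{entpol} for the Lipschitz property on $\ham(\D^{2n}(1+a))$ and restricts to $\ham(\D^{2n})$; this is exactly the monotonicity point you verify explicitly for $\iota_*$.
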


The relation between Hofer's metric and the $C^0$-metric on $\ham
(\Sigma)$ is subtle. First of all, the $C^0$-metric is never
continuous with respect to Hofer's metric. Furthermore, arguing as
in Example \ref{ex-cal-homo} one can show that Hofer's metric on
$\ham(\D^{2n})$ is not continuous in the $C^0$-topology. However,
for $\R^{2n}$ equipped with the standard symplectic form $dp
\wedge dq$ (informally speaking, this corresponds to the case of
ball of infinite radius), Hofer's metric is continuous for the $C^{0}$-Whitney topology
\cite{Hofer93}.

An attempt to understand the relation between Hofer's metric and
the $C^0$-metric led Le Roux \cite{leroux} to the following
problem. Let $\mathscr{E}_{C} \subset \ham(\D^{2n})$ be the
complement of the closed ball (in Hofer's metric) of radius $C$
centered at the identity:

$$\mathscr{E}_{C}:=\{f\in \ham(\D^{2n}), d_{H}(f,\identity)>C\}.$$

\noindent Le Roux asked the following: {\it is it true that $\mathscr{E}_{C}$ has a
non-empty interior in the $C^{0}$-topology for any $C>0$?}

The energy-capacity inequality \cite{Hofer90} states that if $f
\in \ham(\D^{2n})$ displaces $\phi(\D^{2n}(r))$, where $\phi$ is
any symplectic embedding of the Euclidean ball of radius $r$, then
Hofer's norm of $f$ is at least $\pi r^2$. (We say that $f$
{\it displaces a set $U$}, if $f(U)\cap \bar{U}=\emptyset$). By
Gromov's packing inequality \cite{Gromov}, this could happen only
when $r^2 \leq 1/2$. Since the displacement property is
$C^0$-robust, we get that $\mathscr{E}_C$ indeed has a non-empty
interior in the $C^0$-sense for  $C < \pi/2$. Using our
quasi-morphisms we get an affirmative answer to
Le Roux's question even for large values of $C$.

\begin{corollary}\label{topology}

For any $C>0$ the set $\mathscr{E}_{C}$ has a non-empty interior in
the $C^{0}$-topology.

\end{corollary}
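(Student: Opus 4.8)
The plan is to deduce Corollary~\ref{topology} from Theorem~\ref{theorem-highdim} together with Proposition~\ref{prop-general-top-group}. The idea is that a $C^0$-continuous homogeneous quasi-morphism on $\ham(\D^{2n})$ which is also Lipschitz in Hofer's metric has sublevel sets that are large in Hofer's metric but still $C^0$-open on a suitable region, and a single such quasi-morphism with unbounded image already forces $\mathscr{E}_C$ to have nonempty $C^0$-interior for every $C$.

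\medskip
\noindent
\textbf{Step 1: Extract the right quasi-morphism.} By the Proposition in Section~\ref{subsec-app}, there is a nonzero homogeneous quasi-morphism $\mu:\ham(\D^{2n})\to\R$ that is continuous in the $C^0$-topology and Lipschitz for Hofer's metric; let $L>0$ be its Hofer-Lipschitz constant, so $|\mu(f)|\le L\,d_H(f,\identity)$ for all $f$. Since $\mu$ is a nonzero homogeneous quasi-morphism, it is unbounded: pick $f_0$ with $\mu(f_0)=:c\ne 0$, and note $\mu(f_0^N)=Nc$.

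\medskip
\noindent
\textbf{Step 2: Locate a $C^0$-open set deep in Hofer's metric.} Fix $C>0$. Choose an integer $N$ with $|N c|>LC$, and set $g:=f_0^N$, so $|\mu(g)|>LC$. Consider the set
$$\cW:=\{\,h\in\ham(\D^{2n})\ :\ |\mu(h)|>LC\,\}.$$
Because $\mu$ is $C^0$-continuous and $g\in\cW$, the set $\cW$ is a $C^0$-open neighborhood of $g$ in $\ham(\D^{2n})$. On the other hand, for every $h\in\cW$ the Lipschitz estimate gives $LC<|\mu(h)|\le L\,d_H(h,\identity)$, hence $d_H(h,\identity)>C$; that is, $\cW\subset\mathscr{E}_C$. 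Therefore $\mathscr{E}_C$ contains the nonempty $C^0$-open set $\cW$, which proves the corollary.

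\medskip
\noindent
\textbf{Remark on the main point.} There is essentially no obstacle here once Theorem~\ref{theorem-highdim} and the Hofer-Lipschitz refinement in Section~\ref{subsec-app} are granted: the whole content of the corollary is packaged into the existence of a \emph{single} quasi-morphism that is simultaneously $C^0$-continuous, Hofer-Lipschitz, and unbounded. The one subtlety worth stating explicitly is that $C^0$-openness of $\cW$ is taken inside $\ham(\D^{2n})$ with its $C^0$-topology, which is exactly the topology in which $\mathscr{E}_C$ is being probed; since $\ham(\D^{2n})=\cD(\D^{2n})$ for the ball, and $\mu$ is $C^0$-continuous on this group by construction, no further care is needed. (Alternatively, one may invoke Proposition~\ref{prop-general-top-group} directly: were $\mathscr{E}_C$ to have empty $C^0$-interior for some $C$, its complement — a Hofer-ball — would be $C^0$-dense, but $\mu$ is bounded by $LC$ on that ball, contradicting unboundedness of $\mu$ on a $C^0$-dense set combined with $C^0$-continuity.)
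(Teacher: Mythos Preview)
Your proof is correct and follows essentially the same approach as the paper: both arguments use a single nonzero $C^0$-continuous, Hofer-Lipschitz homogeneous quasi-morphism $\mu$, pick an element where $|\mu|$ exceeds $L\cdot C$, and observe that the $C^0$-open superlevel set $\{|\mu|>LC\}$ lies in $\mathscr{E}_C$ by the Lipschitz bound. The only cosmetic difference is that you spell out the unboundedness of $\mu$ via homogeneity, whereas the paper simply asserts the existence of $f$ with $|\mu(f)|/{\rm Lip}(\mu)\ge C+1$.
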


\begin{proof}
This follows just from the existence of a nontrivial homogeneous quasi-morphism
$\mu : \ham(\D^{2n})\to \R$ which is both continuous in the
$C^{0}$-topology and Lipschitz with respect to Hofer's metric.
Indeed, pick a diffeomorphism $f$ such that $$\frac{\vert
\mu(f)\vert}{{\rm Lip}(\mu)}\ge C+1,$$ where ${\rm Lip} (\mu)$ is
the Lipschitz constant of $\mu$ with respect to Hofer's metric.
There is a neighborhood $O$ of $f$ in $\ham(\D^{2n})$ in the
$C^{0}$-topology on which $\vert \mu \vert > C\cdot {\rm Lip}(\mu)$.
We get that $||g||_{H}>C$ for $g\in O$ and hence
$O\subset \mathscr{E}_{C}$. This proves the corollary.
\end{proof}

Note that Le Roux's question makes sense on any symplectic
manifold. For certain closed symplectic manifolds with infinite
fundamental group one can easily get a positive answer using the
energy-capacity inequality
 in the universal cover (as in \cite{lalondepol}).
However, for closed simply connected manifolds (and already for
the case of the $2$-sphere) the question is wide open.

\subsection{Acknowledgements}
This text started as an attempt to understand a remark of Dieter
Kot\-schick. We thank him for stimulating discussions and in particular for communicating to us the idea of getting the continuity from the $C^{0}$-fragmentation, which appeared in a preliminary version of \cite{kotschick}. The authors
would like to thank warmly Fr\'ed\'eric Le Roux for
 his comments on this work and for the thrilling discussions we had
  during the preparation of this article. The third author would like to thank
   Tel-Aviv University for its hospitality during the spring of 2008, when this work started. The second author expresses his deep gratitude to Oleg Viro for a generous help and support at the beginning of his research in topology.

\section{Quasi-morphisms for the ball}\label{sec-highdim}

In this section we prove Theorem~\ref{theorem-highdim}.

Denote by $\D^{2n}(r)$ the  Euclidean ball $\{|p|^2 +|q|^2 \leq
r^2\}$, so that $\D^{2n}= \D^{2n}(1)$. We say that a set $U$ in a
symplectic manifold $(\Sigma,\omega)$ is {\it displaceable}, if
there exists $\phi\in \ham(\Sigma)$ which displaces it:
$\phi(U)\cap \bar{U}=\emptyset$. A quasi-morphism $\mu: \ham
(\Sigma)\to\R$ will be called {\it Calabi}, if for any
displaceable domain $U\subset M$, such that $\left.
\omega\right|_U$ is exact, one has $\left. \mu\right|_{\ham(U)} =
\left. {\it Cal}\right|_{\ham(U)}$.

We will use the following result, established in \cite{entpol}: there exists $a>0$ so that the group $\ham(\D^{2n}(1+a))$ admits
an infinite-dimensional space of quasi-morphisms which are
Lipschitz in Hofer's metric, vanish on $\ham (U)$ for every
displaceable domain $U \subset \D^{2n}(1+a)$ and do not vanish on
$\ham(\D^{2n})$.
 These
quasi-morphisms are obtained by subtracting the appropriate
multiple of the Calabi homomorphism from the Calabi
quasi-morphisms constructed in \cite{BEP}. We claim that the
restriction of each such quasi-morphism, say $\eta$, to
$\ham(\D^{2n})$ is continuous in the $C^0$-topology. By
Proposition~\ref{prop-general-top-group-1}, this would yield the
desired result. By Proposition~\ref{prop-general-top-group} it
suffices to show that for some $\epsilon >0$ the quasi-morphism
$\eta$ is bounded on all $f \in \ham (\D^{2n})$ such that
\begin{equation}\label{eq-hdm}
|f(x)-x| < \epsilon \;\; \forall x \in \D^{2n}\;.
\end{equation}
For $c >0$ define the strip
$$\Pi(c):= \{(p,q) \in \R^{2n}\;:\; |q_n| < c\}\;.$$
Choose $\epsilon >0$ so small that $\Pi(2 \epsilon) \cap \D^{2n}$
is displaceable in $\D^{2n}(1+a)$. Put $D_{\pm} := \D^{2n} \cap
\{\pm q_n
>0\}$. Observe that $D_{\pm}$ are displaceable in $\D^{2n}(1+a)$ by a
Hamiltonian diffeomorphism that can be represented outside a
neighborhood of the boundary as a small vertical shift along the
$q_n$-axis (in the case of $D_+$ we take the shift that moves it
up and in the case of $D_-$ the shift that moves it down) composed
with a $180$ degrees rotation in the $(p_n,q_n)$-coordinate plane.
The desired boundedness result immediately follows from the next
fragmentation-type lemma:

\medskip
\noindent
\begin{lemma}
\label{lem-frag-hdim} Assume that $f \in \ham(\D^{2n})$ satisfies
\eqref{eq-hdm}. Then $f$ can be decomposed as $\theta
\phi_+\phi_-$, where $\theta \in \ham(\Pi(2 \epsilon) \cap
\D^{2n})$ and $\phi_{\pm} \in \ham(D_{\pm})$.
\end{lemma}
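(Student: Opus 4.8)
The plan is to decompose $f$ in two stages, first isolating the part of $f$ that ``straddles'' the hyperplane $\{q_n=0\}$ and then splitting the remainder into pieces supported in $D_+$ and $D_-$. Since $f$ satisfies \eqref{eq-hdm}, it moves every point by less than $\epsilon$; in particular $f$ maps the slab $\Pi(\epsilon)\cap\D^{2n}$ into the thicker slab $\Pi(2\epsilon)\cap\D^{2n}$, and it maps the region $\{q_n\ge 2\epsilon\}\cap\D^{2n}$ into $\{q_n>\epsilon\}\cap\D^{2n}$ and similarly for the lower region. The first step is to choose a Hamiltonian diffeomorphism $\theta\in\ham(\Pi(2\epsilon)\cap\D^{2n})$ which agrees with $f$ on a neighborhood of $\Pi(\epsilon)\cap\D^{2n}$: take a Hamiltonian $F$ generating $f$, multiply it by a cutoff function $\chi(q_n)$ that equals $1$ on $|q_n|\le\epsilon$ and vanishes for $|q_n|\ge 2\epsilon$, and let $\theta$ be the time-one map of $\chi F$. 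Because the support of $\chi F$ lies in $\Pi(2\epsilon)\cap\D^{2n}$ and this set is invariant under the flow (the flow of $\chi F$ is the identity outside it), $\theta$ is a genuine element of $\ham(\Pi(2\epsilon)\cap\D^{2n})$. The point of this construction is that $g:=\theta^{-1}f$ is then the identity on a neighborhood of $\{q_n=0\}$: indeed on $\Pi(\epsilon/2)$, say, $\theta$ coincides with $f$, so $g$ fixes that neighborhood pointwise.

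The second step is to split $g=\phi_+\phi_-$. Since $g$ is the identity near $\{q_n=0\}$, its support is contained in $\{q_n>0\}\cup\{q_n<0\}$, which is disconnected; write $g_+:=g$ on $\{q_n>0\}$ extended by the identity, and $g_-$ likewise on $\{q_n<0\}$, so that $g=g_+g_-=g_-g_+$ as these commute. A priori $g_+$ need not be \emph{Hamiltonian} on the piece $D_+=\D^{2n}\cap\{q_n>0\}$ — one must worry about its flux. However, $g_+$ is a compactly supported symplectomorphism of $D_+$ lying in the identity component (it is the time-one map of the restriction of a Hamiltonian isotopy, namely the restriction to $\{q_n>0\}$ of the isotopy $t\mapsto \theta_t^{-1}f_t$ — here one should check that this isotopy stays supported away from $\{q_n=0\}$, which follows from the same displacement-by-$\epsilon$ estimates applied at every time), and on the disc $D_+$ one has $H^1_{comp}(D_+,\R)=0$, so $\cD(D_+)=\ham(D_+)$. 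Hence $g_+\in\ham(D_+)$, and symmetrically $g_-\in\ham(D_-)$. Setting $\phi_\pm:=g_\pm$ gives $f=\theta\phi_+\phi_-$ as required.

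The main obstacle, and the step deserving the most care, is the first one: producing $\theta$ supported in $\Pi(2\epsilon)\cap\D^{2n}$ that agrees with $f$ near $\{q_n=0\}$ while remaining Hamiltonian. The naive cutoff $\chi(q_n)F$ does the job for the support condition, but one must verify that the resulting time-one map really coincides with $f$ on a full neighborhood of the central slice — this requires that the cutoff be performed not on $F$ directly but along the isotopy, i.e. that where $\chi\equiv 1$ the truncated flow equals the original flow, which is automatic since the vector fields agree there, combined with the fact that trajectories starting in $\{|q_n|<\epsilon/2\}$ stay in $\{|q_n|<\epsilon\}$ for all time $t\in[0,1]$ (again by \eqref{eq-hdm}, now applied to the full isotopy, which one may assume satisfies the same $\epsilon$-bound by reparametrizing or by choosing $F$ with small Hofer energy). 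Once $\theta$ is correctly constructed so that $\theta^{-1}f$ is the identity near $\{q_n=0\}$, the remaining splitting is the standard disconnected-support argument together with the vanishing of $H^1_{comp}$ of a disc, and is routine.
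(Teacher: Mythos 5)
There is a genuine gap in your first step, and it is exactly the difficulty the lemma is designed to overcome. The hypothesis \eqref{eq-hdm} is a condition on the \emph{time-one map} $f$ only; it gives no control whatsoever on the intermediate maps $f_t$ of a Hamiltonian isotopy generating $f$. A trajectory of the flow of $F$ starting in $\{|q_n|<\epsilon/2\}$ may leave $\Pi(\epsilon)$ at intermediate times, travel all over the ball, and return only at $t=1$. For such a point the flow of the truncated Hamiltonian $\chi(q_n)F$ diverges from the flow of $F$ as soon as the trajectory reaches the region where $\chi<1$, so the time-one map $\theta$ of $\chi F$ need not agree with $f$ anywhere near $\{q_n=0\}$, and $\theta^{-1}f$ need not be the identity there. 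Your parenthetical fix --- that one may assume the whole isotopy satisfies the same $\epsilon$-bound ``by reparametrizing or by choosing $F$ with small Hofer energy'' --- does not work: time reparametrization does not change the trajectories, and there is no known way to generate a $C^0$-small Hamiltonian diffeomorphism by a $C^0$-small (or Hofer-small) isotopy; indeed the paper points out that Hofer's metric on $\ham(\D^{2n})$ is \emph{not} continuous in the $C^0$-topology, and later (Section 5.3) explicitly identifies the lack of control on the isotopy as the crucial obstruction in higher dimensions.

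The missing idea is to first replace the given isotopy $\{f_t\}$ by a better one with the same endpoints. The paper conjugates by dilations: setting $h_t=R_{1/c(t)}f_tR_{c(t)}$ with $c(t)\ge 1$ equal to $1$ near $t=0,1$ and larger than $(2\epsilon)^{-1}$ in between, one gets a Hamiltonian isotopy from $\identity$ to $f$, still supported in $\D^{2n}$, for which the hyperplane $S=\{q_n=0\}$ satisfies $h_t(S)\subset\Pi(2\epsilon)$ for \emph{all} $t$: at intermediate times everything relevant is squeezed into the tiny ball $\D^{2n}(2\epsilon)\subset\Pi(2\epsilon)$, and at the endpoints one uses \eqref{eq-hdm}. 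Only after this replacement is the cutoff of the Hamiltonian near a neighborhood $h_t(\Pi(\kappa))$ legitimate, producing your $\theta$ and making $\theta^{-1}f$ the identity near $S$. Your second step (splitting the remainder into commuting factors supported in $D_{\pm}$, using $H^1_{comp}(D_{\pm},\R)=0$ to see they are Hamiltonian) is fine and matches the paper, but it too needs the corrected isotopy in order to know that the remainder is generated by a Hamiltonian vanishing on a neighborhood of $S$.
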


\medskip
\noindent Indeed, $\eta$ vanishes on $\ham (U)$ for every
displaceable domain $U \subset \D^{2n}(1+a)$. Since $\Pi(2
\epsilon) \cap \D^{2n}$ and $D_{\pm}$ are displaceable,
$\eta(\theta)=\eta(\phi_{\pm})=0$. Thus  $|\eta(f)| \leq 2C(\eta)$
for every $f\in \ham(\D^{2n})$ lying in the
$\epsilon$-neighborhood of the identity with respect to the
$C^0$-distance, and the theorem follows. It remains to prove the
lemma.

\medskip
\noindent {\bf Proof of Lemma~\ref{lem-frag-hdim}:} Denote by $S$
the hyperplane $\{q_n=0\}$. For $c>0$ write $R_c$ for the dilation
$z \to cz$ of $\R^{2n}$. We assume that all compactly
supported diffeomorphisms of $\D^{2n}$ are extended to the whole
$\R^{2n}$ by the identity.

Take $f \in \ham(\D^{2n})$ which satisfies \eqref{eq-hdm}. Let
$\{f_t\}_{0\le t \le 1}$, be a Hamiltonian isotopy supported in
$\D^{2n}$ so that $f_t = \identity$ for $t \in [0,\delta)$ and
$f_t = f$ for $t \in (1-\delta,1]$ for some $\delta > 0$. Take
a smooth function $c: [0,1] \to [1, +\infty)$ which equals $1$
near $0$ and $1$ and satisfies $c(t) > (2\epsilon)^{-1}$ on
$[\delta,1-\delta]$. Consider the Hamiltonian isotopy $h_t =
R_{1/c(t)}f_tR_{c(t)}$ of $\R^{2n}$. Note that $h_0=\identity$ and
$h_1=f$. Since $c(t)\geq 1$, we have $h_tz =z$ for $z \notin
\D^{2n}$, and $h_t$ is supported in $\D^{2n}$.

We claim that $h_t(S) \subset \Pi(2\epsilon)$. Observe that
$R_{c(t)}S =S$. Take any $z \in S$. If $R_{c(t)}z \notin \D^{2n}$,
we have that $h_tz =z$. Assume now that $R_{c(t)}z \in \D^{2n}$.
Consider the following cases:

\begin{itemize}

\item{} If $t \in (1-\delta,1]$, then $f_t R_{c(t)}(S) = f(S)$.
Thus $f_tR_{c(t)}z \in f(S \cap \D^{2n}) \subset \Pi(2\epsilon)$,
where the latter inclusion follows from \eqref{eq-hdm}. Therefore
$h_tz \in \Pi(2\epsilon)$ since $c(t) \geq 1$.

\item{} If $t \in [\delta,1-\delta]$, then $h_tz \in
\D^{2n}(2\epsilon) \subset \Pi(2\epsilon)$ by our choice of the
function $c(t)$.

\item{} If $t \in [0,\delta)$, then $h_t S = S \subset
\Pi(2\epsilon)$.

\end{itemize}

This completes the proof of the claim. 

By continuity of $h_t$, there exists $\kappa >0$ so that
$h_t(\Pi(\kappa)) \subset \Pi(2\epsilon)$ for all $t$. Cutting off
the Hamiltonian of $h_t$ near $h_t(\Pi(\kappa))$ we get a
Hamiltonian flow $\theta_t$ supported in $\Pi(2\epsilon)$ which
coincides with $h_t$ on $\Pi(\kappa)$. Thus, $\theta_t^{-1}h_t$ is
the identity on $\Pi(\kappa)$ for all $t$. It follows that
$\theta_t^{-1}h_t$ decomposes into the product of two commuting
Hamiltonian flows $\phi_-^t$ and $\phi_+^t$ supported in $D_-$ and
$D_+$ respectively. Therefore $f = \theta_1 \phi_-^1 \phi_+^1$ is
the desired decomposition. \qed

\section{Proof of the criterion of continuity on surfaces}
\label{sec-main-result-pf}

\subsection{A $C^0$-small fragmentation theorem on surfaces}
Before stating our next result we recall the notion of {\it
fragmentation} of a diffeomorphism. This is a classical technique
in the study of groups of diffeomorphisms, see e.g.
\cite{Ban78,banyaga-book,bounemoura}. Given a Hamiltonian
diffeomorphism $f$ of a connected symplectic manifold $\Sigma$,
and an open cover $\{U_{\alpha}\}$ of $\Sigma$, one can always
write $f$ as a product of Hamiltonian diffeomorphisms each of
which is supported in one of the open sets $U_{\alpha}$. It is
known that the number of factors in such a decomposition is
uniform in a $C^{1}$-neighborhood of the identity, see
\cite{Ban78, banyaga-book,bounemoura}. To prove our
continuity theorem we actually need to prove a similar result on
surfaces when one consider diffeomorphisms endowed with the
$C^{0}$-topology. Such a result appears in \cite{leroux2} in the
case when the surface is the unit disc. Observe also that the
corresponding fragmentation result is known for volume-preserving
homeomorphisms \cite{fathi}.

\medskip
\noindent
\begin{theorem}
\label{lemma-N0-small-discs} Let $\Sigma$ be a compact connected
surface (possibly with boundary), equipped with an area-form. Then
for every $a>0$ there exists a neighborhood $\cU$ of the identity
in the group $\ham(\Sigma)$ endowed with the $C^{0}$-topology and
an integer $N>0$ such that any diffeomorphism $g\in \cU$ can be
written as a product of at most $N$ Hamiltonian diffeomorphisms
supported in discs of area less than $a$.
\end{theorem}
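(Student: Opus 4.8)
The plan is to build the $C^0$-small fragmentation in two stages: first a coarse fragmentation over a fixed finite cover of $\Sigma$ by topological discs, valid on a $C^0$-neighborhood of the identity, and then a refinement step that breaks each of the resulting pieces into Hamiltonian diffeomorphisms supported in discs of area less than $a$, using only a uniformly bounded number of factors. The second stage can be carried out once and for all independently of $g$, so the only genuinely $C^0$-sensitive assertion is the first stage.

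For the first stage, I would fix an open cover $\{U_1,\dots,U_m\}$ of $\Sigma$ by embedded discs, together with a subordinate shrinking $\{V_i\}$ with $\overline{V_i}\subset U_i$ and $\bigcup V_i=\Sigma$. Choose $\delta>0$ smaller than the Lebesgue number of $\{V_i\}$ with respect to the distance $d$. If $g\in\ham(\Sigma)$ satisfies ${\it dist}(g,\identity)<\delta$, pick a Hamiltonian isotopy $\{g_t\}$ from $\identity$ to $g$ (here a small but subtle point: one wants the isotopy itself, not just $g$, to be $C^0$-small; on a surface this can be arranged because a $C^0$-small area-preserving diffeomorphism is generated by a Hamiltonian whose flow stays $C^0$-small — alternatively invoke the portability argument of \cite{leroux2} for the disc and patch). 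Then subdivide $[0,1]$ into subintervals on which $g_{t_{k+1}}g_{t_k}^{-1}$ moves every point less than $\delta$, hence is supported, after the standard argument, inside the $U_i$'s; composing the fragmentations of each small-time map gives $g=\prod g^{(i)}$ with $g^{(i)}\in\ham(U_i)$ and the number of factors bounded by a constant depending only on the cover. This is the argument of Le Roux~\cite{leroux2}, which I would adapt from the disc to a general compact surface by working chart by chart; I expect this adaptation to be the main obstacle, because controlling the $C^0$-size of the \emph{isotopy} (rather than of the endpoint) and splicing the local fragmentations across overlaps requires care.

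For the second stage, it now suffices to prove: \emph{any} $h\in\ham(U)$, with $U$ a fixed embedded disc in $\Sigma$, is a product of a bounded number $N(U,a)$ of Hamiltonian diffeomorphisms each supported in a disc of area $<a$. Here there is no smallness hypothesis on $h$, so one cannot fragment $h$ directly; instead one uses that $\ham(U)$ is generated, up to conjugation, by a bounded supply of moves. Concretely, cover $U$ by finitely many discs $D_1,\dots,D_r$ each of area $<a$ whose union is $U$, and use the standard fact (perfectness/portability arguments of Banyaga~\cite{Ban78,banyaga-book}, or the area-preserving version in \cite{fathi}) that any element of $\ham(U)$ can be displaced piecewise through these discs; the total number of small-disc factors produced is controlled by $r$, which depends only on $U$ and $a$. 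Taking $N:=\big(\max_i N(U_i,a)\big)\cdot(\text{number of factors from stage one})$ and $\cU$ the $\delta$-ball of stage one finishes the proof.

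Finally, I would note two bookkeeping points that the write-up must address: that the discs of area $<a$ appearing in the conclusion are honest embedded discs in $\Sigma$ (the discs $D_j$ above are chosen as such from the start), and that the factors supported in the $U_i$ of stage one, once pushed through stage two, remain Hamiltonian on all of $\Sigma$ (immediate, since a Hamiltonian supported in a disc extends by zero). The constant $N$ and the neighborhood $\cU$ depend on $a$ and on the auxiliary metric and cover, but not on $g$, which is exactly what is claimed.
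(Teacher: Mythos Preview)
Your Stage~2 is false, and its failure is precisely the phenomenon that makes this theorem nontrivial. You assert that \emph{every} $h\in\ham(U)$, with $U$ a fixed disc, is a product of at most $N(U,a)$ Hamiltonian diffeomorphisms supported in discs of area $<a$, with $N(U,a)$ independent of $h$. But the quasi-morphisms built in Section~\ref{sec-highdim} (obtained by subtracting the Calabi homomorphism from a Calabi quasi-morphism on a slightly larger ball) restrict to nontrivial homogeneous quasi-morphisms $\eta$ on $\ham(U)$ which vanish on $\ham(D)$ for every disc $D$ of sufficiently small area. If your bound held, one would have $|\eta(h)|\le (N(U,a)-1)\,C(\eta)$ for all $h\in\ham(U)$, forcing $\eta\equiv 0$ by homogeneity, a contradiction. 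This is exactly the fragmentation norm discussed in Section~\ref{simplicity-leroux}: the existence of such $\eta$ shows the norm $|\cdot|_\varepsilon$ is unbounded on $\ham(U)$, and whether the analogous bound holds in the homeomorphism category is equivalent to the (open) simplicity of $\cH(\D^2)$. Banyaga's fragmentation and Fathi's results give a finite decomposition, but the number of factors depends on $h$; no portability or perfectness argument can make it uniform.

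The paper evades this trap by arranging that the pieces produced in the first stage are themselves $C^0$-small, not merely supported in fixed discs. It proves the intermediate assertion $(\ast)$: every $f$ in a suitable $C^0$-neighborhood of the identity factors as $g_1\cdots g_{N_1}$ with each $g_i$ supported in one of finitely many fixed discs $D_j$ \emph{and} $\epsilon$-close to the identity. One then conjugates each $g_i$ into $\ham(\D^2)$ by a fixed conformally symplectic chart and applies Le~Roux's disc result, which requires exactly this smallness. Your Stage~1 worry is also real: producing a $C^0$-small Hamiltonian isotopy from a $C^0$-small endpoint is essentially as hard as the theorem itself, and the paper does not attempt it. Instead, $(\ast)$ is proved by induction on the number of $1$-handles, using the area-preserving extension lemmas for discs and rectangles (Lemmas~\ref{lemma-discs} and~\ref{lemma-rectangles}) to peel off one handle at a time: given $f$ small, one constructs directly a small $\psi$ supported in the handle with $\psi=f$ on a core, so that $h=\psi^{-1}f$ is small and supported in the complementary subsurface.
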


\medskip
\noindent This result might be well-known to experts and probably
can be deduced from the corresponding result for homeomorphisms.
 However, since the proof is more difficult for homeomorphisms,
 and in order to keep this paper self-contained, we are going to give a
 direct proof of Theorem~\ref{lemma-N0-small-discs} in
 Section~\ref{sec-fragmentation}. Note that this last section is the most technical part of the text. Given the fragmentation result above,
 one obtains easily a proof of Theorem~\ref{criterion}, as we will show now.

\subsection{Proof of Theorem~\ref{criterion} and Corollary~\ref{criterion-1}} 1)We begin by proving that the
condition appearing in the statement of the theorem is necessary
for the quasi-morphism $\mu$ to be continuous. Assume $\mu$ is
continuous for the $C^{0}$-topology. Then it is bounded on some
$C^0$-neighborhood $\cU$ of the identity in $\ham(\Sigma)$. Choose
now a disc $D_{0}$ in $\Sigma$. If $D_{0}$ has a sufficiently
small diameter, then $\ham(D_{0})\subset \cU$. But since
$\ham(D_{0})$ is a subgroup and $\mu$ is homogeneous, $\mu$ must
vanish on $\ham(D_{0})$.

Now, let $a={\rm area}(D_{0})$. If $D$ is any disc of area less
than $a$, the group $\ham(D)$ is conjugated in $\ham(\Sigma)$ to a
subgroup of $\ham(D_{0})$, because for any two discs of the same
area in $\Sigma$ there exists a Hamiltonian diffeomorphism mapping
one of the discs onto another -- see e.g. \cite{Akveld-Salamon},
Proposition A.1, for a proof (which, in fact, works for all
$\Sigma$, though the claim there is stated only for closed
surfaces). Hence, $\mu$ vanishes on $\ham(D)$ as required.

\medskip
\noindent
\begin{remark}\label{rem-inj} {\rm This proof extends verbatim to
higher-dimensional symplectic manifolds $(\Sigma,\omega)$ which
admit a positive constant $a_0$ with the following property: for
every $a < a_0$  all symplectically embedded balls of volume $a$
in the interior of $\Sigma$ are Hamiltonian isotopic. Here a
symplectically embedded ball of volume $a$ is the image of the
standard Euclidean ball of volume $a$ in $(\R^{2n}, dp \wedge dq)$
under a symplectic embedding. This property holds for instance for
blow-ups of rational and ruled symplectic four-manifolds,
see \cite{McDuff-1,Lalonde,Biran,McDuff-2}.}
\end{remark}

\medskip

2) We now prove the reverse implication. Assume that a homogeneous
quasi-morphism $\mu$ vanishes on all Hamiltonian diffeomorphisms
supported in discs of area $< a$. Take the $C^0$-neighborhood
$\cU$ of the identity and the integer $N$  from Theorem
\ref{lemma-N0-small-discs}. Then $\mu$ is bounded by $(N-1)
C(\mu)$ on $\cU$, and hence, continuous by
Proposition~\ref{prop-general-top-group}. \qed

\medskip

We now prove Corollary~\ref{criterion-1}. Choose compactly
supported symplectic vector fields $v_1,\ldots,v_k$ on $\Sigma$ so
that the cohomology classes of the 1-forms $i_{v_j}\omega$
generate $H^1_{comp}(\Sigma,\R)$. Denote by $h_i^t$ the flow of
$v_i$. Let $\cV$ be the image of the following map:
$$\begin{array}{rcl}
 (-\epsilon,\epsilon)^k  & \to & \cD\\
(t_1,\ldots,t_k) & \mapsto & \prod_{i=1}^k h_i^{t_i}.\\
\end{array}$$

\noindent Using assumption
(i) and  applying Theorem~\ref{criterion} we get that the
quasi-morphism $\mu$ is bounded on a $C^0$-neighborhood, say
$\cU$, of the identity in $\ham(\Sigma)$. Thus by (ii) and the
definition of a quasi-morphism, $\mu$ is bounded on $\cU \cdot
\cV$. But the latter set is a $C^0$-neighborhood of the identity
in $\cD$. Thus $\mu$ is continuous on $\cD$ by
Proposition~\ref{prop-general-top-group}. \qed

\section{Examples of continuous quasi-morphisms}\label{examples}

In this section we prove case by case
Theorem~\ref{theorem-main-1}. The case of the disc has been
already explained in Section~\ref{sec-highdim}. This construction
generalizes verbatim to all closed surfaces of genus $0$ with {\bf
non-empty} boundary, which proves Theorem~\ref{theorem-main-1} in
this case.

When $\Sigma$ is a closed surface of genus greater than one,
Gambaudo and Ghys constructed in \cite{gg} an infinite-dimensional
space of homogeneous quasi-morphisms on the group $\cD (\Sigma)$,
satisfying the hypothesis of Theorem~\ref{criterion}. These
quasi-morphisms are defined using $1$-forms on the surface and can
be thought of as some ``quasi-fluxes". We refer to Section 6.1 of
\cite{gg} or to Section 2.5 of \cite{ghys} for a detailed
description. The fact that these quasi-morphisms extend continuously
to the identity component of the group of area-preserving
homeomorphisms of $\Sigma$ can be checked easily without appealing
to Theorem~\ref{criterion}. This was already observed in
\cite{ghys}.

In order to settle the case of surfaces of genus one, we shall
apply the criterion given by Theorem~\ref{criterion}. The quasi-morphisms that we will use were
constructed by Gambaudo and Ghys in \cite{gg}, see also \cite{py}. We
recall briefly this construction now.

The fundamental group $\pi_{1}(\T\setminus\{0\})$ of the
once-punctured torus is a free group on two generators, $a$ and
$b$, represented by a parallel and a meridian in
$\T\setminus\{0\}$. Let $\mu : \pi_{1}(\T\setminus\{0\})\to \R$ be
a homogeneous quasi-morphism. It is known that there are plenty of
such quasi-morphisms (see \cite{brooks} for instance). We will
associate to $\mu$ a homogeneous quasi-morphism $\widetilde{\mu}$
on the group $\cD (\T)$.

We  fix a base point $x_{\ast}\in \T\setminus\{0\}$. For all $v\in
\T\setminus\{0\}$ we choose a path $\alpha_{v}(t)$, $t\in [0,1]$,
in $\T\setminus\{0\}$ from $x_{\ast}$ to $v$. We assume that the
lengths of the paths $\alpha_{v}$ are uniformly bounded with
respect to a Riemannian metric defined on the compact surface
obtained by blowing-up the origin on $\T$. Consider an element
$f\in \cD (\T)$ and fix an isotopy $(f_{t})$ from the identity to
$f$. If $x$ and $y$ are distinct points in the torus, we can
consider the curve
$$f_{t}(x)-f_{t}(y)$$
in $\T\setminus\{0\}$. Its homotopy class depends only on $f$. We
close it to form a loop:
$$\alpha(f,x,y):=\alpha_{x-y} \ast (f_{t}(x)-f_{t}(y))\ast \overline{\alpha_{f(x)-f(y)}},$$
where $\overline{\alpha_{f(x)-f(y)}} (t) := \alpha_{f(x)-f(y)}
(1-t)$. We have the cocycle relation:
$$\alpha(fg,x,y)=\alpha(g,x,y)\ast \alpha(f,g(x),g(y)).$$
Define a function $u_{f}$ on $\T\times \T\setminus\Delta$
(where $\Delta$ is the diagonal) by
$u_{f}(x,y)=\mu(\alpha(f,x,y))$. From the previous relation and
the fact that $\mu$ is a quasi-morphism we deduce the relation:
$$\left| u_{fg}(x,y)-u_{g}(x,y)-u_{f}(g(x),g(y))\right| \le
C(\mu),\ \forall f,g\in \cD (\T).$$  Moreover, it is not difficult
to see that the function $u_{f}$ is measurable and bounded on
$\T\times \T\setminus\Delta$. Hence, the map
$$f \mapsto \int_{\T\times \T}u_{f}(x,y)dxdy$$ is a quasi-morphism.
We denote by $\widetilde{\mu}$ the associated homogeneous
quasi-morphism:
$$\widetilde{\mu} (f)=\underset{p\to \infty}{{\rm lim}}\frac{1}{p}\int_{\T\times \T}u_{f^{p}}(x,y)dxdy.$$

\noindent One easily check that $\widetilde{\mu}$ is linear on any $1$-parameter subgroup. The following proposition was established in \cite{py}:

\begin{proposition}
Let $f\in \ham(\T)$ be a diffeomorphism supported in a disc $D$.
Then for any homogeneous quasi-morphism $\mu :
\pi_{1}(\T\setminus\{0\})\to \R$ one has
$$\widetilde{\mu} (f)=2\mu([a,b])\cdot {\it Cal} (f),$$
where ${\it Cal} : \ham(D) \to \R$ is the Calabi homomorphism.
\end{proposition}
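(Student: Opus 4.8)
The plan is to compute $\widetilde\mu(f)$ directly from the definition by exploiting that $f$ is supported in a disc $D\subset\T$. Fix a Hamiltonian isotopy $(f_t)$ from the identity to $f$ which is supported in $D$ for all $t$; this is possible since $f\in\ham(D)$. The key observation is that for $x,y$ \emph{both} outside $D$, the curve $f_t(x)-f_t(y)$ is constant, so $\alpha(f,x,y)$ is homotopic to a fixed short loop, and after passing to the homogenization its contribution to $\widetilde\mu(f)$ vanishes (up to an error that scales sublinearly in $p$). Hence only pairs $(x,y)$ with $x\in D$ or $y\in D$ matter, and by symmetry and a further reduction one should be able to restrict attention to the ``interesting'' region. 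The main point will be to identify the homotopy class of $\alpha(f^p,x,y)$ in $\pi_1(\T\setminus\{0\})$ for $x\in D$, $y$ far from $D$ (or both in $D$): the difference $f^p_t(x)-f^p_t(y)$ winds around the puncture in a way governed by how much $f^p$ rotates points of $D$ relative to a fixed point, and the total winding, integrated over $D$, is exactly the Calabi invariant ${\it Cal}(f^p)=p\,{\it Cal}(f)$.

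First I would set up the winding-number interpretation. Choose the base point $x_\ast$ and the paths $\alpha_v$ so that, for $v$ ranging over a small ball, the loop contribution is controlled. For $y$ fixed far from $D$ and $x\in D$, write $\alpha(f,x,y)$ as a concatenation and observe that, in $\pi_1(\T\setminus\{0\})$, the relevant class is represented (up to bounded correction independent of $x$) by the loop $t\mapsto f_t(x)-f_t(y)\simeq f_t(x)-y$, i.e. essentially the trajectory $t\mapsto f_t(x)$ viewed as a loop based near $y$ in the punctured torus. Its class depends on how $f_t(x)$ moves; when we iterate to $f^p$ and homogenize, $\mu$ evaluated on this class is asymptotically $p$ times a ``rotation number'' type quantity. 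The cleanest way to package this is to recall from the Gambaudo--Ghys/Py framework that for $f$ supported in a disc one has $\widetilde\mu(f)=\lambda\cdot{\it Cal}(f)$ for some constant $\lambda$ depending only on $\mu$ (by the general principle that a homogeneous quasi-morphism restricted to the abelian-up-to-bounded family $\ham(D)$ must be a homomorphism, hence proportional to ${\it Cal}$, the unique such homomorphism), and then to pin down $\lambda=2\mu([a,b])$ by a single explicit computation.

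For the explicit computation I would take a model: let $f$ be (isotopic to) a small rotation-like Hamiltonian diffeomorphism supported in a disc $D$ that, say, performs $N$ full twists near its center. A point $x$ near the center of $D$ has trajectory $f_t(x)$ tracing $N$ loops around the center of $D$. Now the center of $D$ is a fixed point of $\T$, and a small loop around it in $\T\setminus\{0\}$ — once we drag it to the base point — represents the commutator $[a,b]\in\pi_1(\T\setminus\{0\})$ (this is the standard fact that the boundary of the once-punctured torus is the commutator of the two generators). Therefore $\alpha(f,x,y)$ for $x$ near the center represents $[a,b]^N$ up to bounded error, and for $x$ ranging over all of $D$ the ``effective number of twists'' integrates against $dx$ to give a multiple of ${\it Cal}(f)$; the constant can be calibrated so that $\int_{\T\times\T}u_{f^p}\,dx\,dy$ is asymptotic to $2p\,\mu([a,b])\,{\it Cal}(f)$, the factor $2$ coming from the two symmetric contributions $x\in D$ and $y\in D$. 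Dividing by $p$ and letting $p\to\infty$ gives the claimed formula.

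The main obstacle I anticipate is making rigorous the claim that the homotopy class of $\alpha(f^p,x,y)$ is, up to a bounded (in $p$ and in $x,y$) correction, equal to $[a,b]^{k(x)}$ where the integral of $k$ over $D$ computes ${\it Cal}$. This requires care on three fronts: (1) handling the pairs $(x,y)$ where both lie in $D$, where the curve $f^p_t(x)-f^p_t(y)$ is genuinely two-body and one must argue the winding still averages to ${\it Cal}$; (2) uniform bounds on the correction terms $\alpha_{x-y}\ast\cdots\ast\overline{\alpha_{f(x)-f(y)}}$, using the assumed uniform bound on the lengths of the $\alpha_v$ together with the defect inequality for $\mu$, so that these contribute $O(1)$ and die in the limit; and (3) the measurability/boundedness of $u_{f^p}$ uniformly enough to pass the limit inside the integral. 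Once these are in place, the identification of a small loop around the puncture with $[a,b]$ is the standard presentation of the once-punctured torus group, and the calibration of the constant is a direct area computation.
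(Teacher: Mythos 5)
First, a point of reference: the paper does not prove this proposition itself --- it is quoted from Py's note \cite{py}, and the proof there rests on Gambaudo--Ghys's asymptotic linking formula from \cite{gg0}. Your overall skeleton (locate the loop $\alpha(f^p,x,y)$ near the puncture, identify its class as a power of $[a,b]$ using conjugation-invariance of $\mu$, relate the exponent to ${\it Cal}$ by integration, and discard the uniformly bounded corrections coming from the connecting paths $\alpha_v$ after homogenization) is the right one. But two of your key steps are wrong as stated.

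The first is the ``general principle'' that a homogeneous quasi-morphism restricted to $\ham(D)$ must be a homomorphism and hence a multiple of ${\it Cal}$. The group $\ham(D)$ is not abelian, nor abelian up to bounded error; indeed Theorem 1.1 of this very paper produces an infinite-dimensional space of homogeneous quasi-morphisms on $\ham(\D^{2})$ that are not homomorphisms. So you cannot reduce to calibrating a single constant on one example: you must carry out the asymptotic computation for \emph{every} $f$ supported in $D$, and the proportionality to ${\it Cal}$ is the \emph{conclusion} of that computation, not an a priori input. The second error is in the computation itself. The loop $t\mapsto f_t(x)-f_t(y)$ winds around the puncture $0\in\T$, not around the center of $D$. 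If $x\in D$ and $y\notin D$, the curve is $f_t(x)-y$, which stays in the disc $D-y$; since $y\notin D$ this disc misses $0$, so the loop is null-homotopic in $\T\setminus\{0\}$ and such pairs contribute only $O(1)$, which dies in the homogenization. The entire leading term therefore comes from pairs with \emph{both} $x,y\in D$, where $f_t(x)-f_t(y)$ lives in the punctured neighborhood $(D-D)\setminus\{0\}$ of $0$ and $\alpha(f^p,x,y)$ is conjugate to $[a,b]^{k_p(x,y)}$ with $k_p(x,y)$ the winding number of the \emph{difference} of the two trajectories around $0$, i.e.\ their linking number. The factor $2$ is then not a symmetry between ``$x\in D$'' and ``$y\in D$''; it is the constant in the Gambaudo--Ghys identity $\lim_{p}\frac1p\int_{D\times D}k_p(x,y)\,dx\,dy=2\,{\it Cal}(f)$. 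As written, your single-trajectory ``number of twists around the center of $D$'' picture would assign a nonzero leading contribution to pairs that in fact contribute nothing, and would miscompute the constant.
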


By Corollary~\ref{criterion-1}, we get that the quasi-morphisms
$\widetilde{\mu}$, where $\mu$ runs over the set of homogeneous
quasi-morphisms on $\pi_{1}(\T\setminus\{0\})$ which take the
value $0$ on the element $[a,b]$, are all continuous in the
$C^{0}$-topology. According to \cite{gg}, this family
spans an infinite-dimensional vector space. To complete the proof
of Theorem~\ref{theorem-main-1} for surfaces of genus $1$, we only
have to check that the diffeomorphisms which were constructed in
\cite{gg} in order to establish the existence of an arbitrary
number of linearly independent quasi-morphisms $\widetilde{\mu}$
can be chosen to be supported in any given subsurface of genus one.
But this follows easily from the construction in Section 6.2 of
\cite{gg}.

\section{Discussion and open questions}\label{open}

\subsection{Is $\cH(\D^2)$ simple? (Le Roux's work)}\label{simplicity-leroux}

Although the algebraic structure of groups of volume-preserving
homeomorphisms in dimension greater than $2$ is well-understood
\cite{fathi}, the case of area-preserving homeomorphisms of
surfaces is still mysterious. In particular, it is unknown
whether the group $\cH(\D^2)$ is simple. Some normal subgroups of
$\cH(\D^2)$ were constructed by Ghys, Oh, and more recently by Le
Roux, see \cite{bounemoura} for a survey. However, it is unknown
whether any of these normal subgroups is a proper subgroup of
$\cH(\D^2)$. In
\cite{leroux2}, Le Roux established that the simplicity of the group
$\cH(\D^2)$ is equivalent to a certain fragmentation property. Namely, he established the following result (in
the following we assume that the total area of the disc is $1$):

{\it The group $\cH(\D^2)$ is simple if and only if there exist
numbers $\rho'<\rho$ in $(0,1]$ and an integer $N$ so that the
following holds: any homeomorphism $g\in \cH(\D^2)$, whose support
is contained in a disc of area at most $\rho$, can be written as a
product of at most $N$ homeomorphisms whose supports are contained
in discs of area at most $\rho'$.}

(By a result of Fathi \cite{fathi}, cf. \cite{leroux2}, $g$ can
always be represented as such a product with some, a priori
unknown, number of factors).

\begin{remark} {\rm
One can show that the property above depends only on $\rho$ and
not of the choice of $\rho'$ smaller than $\rho$ \cite{leroux2}.}
\end{remark}

In the sequel we will denote by $G_{\varepsilon}$ the set of
homeomorphisms in $\cH(\D^2)$ whose support is contained in an
open disc of area at most $\varepsilon$. For an element $g\in
\cH(\D^2)$ we define (following \cite{bip,leroux2})
$|g|_{\varepsilon}$ as the minimal integer $n$ such that $g$ can
be written as a product of $n$ homeomorphisms of
$G_{\varepsilon}$. Any homogeneous quasi-morphism $\phi$ on
$\cH(\D^2)$ which vanishes on $G_{\varepsilon}$ gives the
following lower bound on $|\cdot |_{\varepsilon}$:
$$|g|_{\varepsilon}\ge \frac{|\phi(g)|}{C(\phi)}\;\;\;\;\; (g\in \cH(\D^2)).$$
In particular, if $\phi$ vanishes on $G_{\varepsilon}$ but not on
$G_{\varepsilon'}$ for some $\varepsilon'>\varepsilon$, then the
norm $|\cdot |_{\varepsilon}$ is unbounded on $G_{\varepsilon'}$.

If $\phi: \cH(\D^2)\to \R$ is a homogeneous quasi-morphism which
is continuous in the $C^{0}$-topology, we can define $a(\phi)$ to
be the supremum of the positive numbers $a$ satisfying the
following property: $\phi$ vanishes on $\ham(D)$ for any disc $D$
of area less or equal than $a$ (for a homogeneous quasi-morphism
which is not continuous in the $C^{0}$-topology, one can define
$a(\phi)=0$). One can think of $a(\phi)$ as the {\it scale} at
which one can detect the nontriviality of $\phi$. According to the discussion above, the existence of a
nontrivial quasi-morphism with $a(\phi)>0$ implies that the norm
$|\cdot |_{a(\phi)}$ is unbounded on the set $G_{\rho}$ (for any
$\rho
>a(\phi)$).

According to Le Roux's result, the existence of a sequence of
continuous (for the $C^{0}$-topology) homogeneous quasi-morphisms
$\phi_{n}$ on $\cH(\D^2)$ with $a(\phi_{n})\to 0$ would imply that
the group $\cH(\D^2)$ is not simple. However, for all the examples
 of quasi-morphisms on $\cH(\D^2)$ that we know (coming from the continuous quasi-morphisms described
  in Section~\ref{sec-highdim}), one has $a(\phi)\ge \frac{1}{2}$.

\subsection{Quasi-morphisms on $\SPH^2$}
\label{sect-sphere}

Consider the sphere $\SPH^2$ equipped with an area form of
total area $1$.

\medskip
\noindent \begin{question}\label{ques-1}
\begin{itemize}
\item[{(i)}] Does there exist a non-vanishing $C^0$-continuous
homogeneous quasi-morphism on $\ham(\SPH^2)$? \item[{(ii)}] If
yes, can it be made Lipschitz with respect to Hofer's metric?
\end{itemize}
\end{question}

\medskip

If the answer to the first question was negative, this would imply that the Calabi quasi-morphism constructed in \cite{entpol} is unique. Indeed, the difference of two Calabi quasi-morphisms is continuous in the $C^{0}$-topology according to Theorem~\ref{criterion}. Note that for surfaces of positive genus, the examples of $C^{0}$-continuous quasi-morphisms that we gave are related to the existence of many Calabi quasi-morphisms \cite{py1,py}.

In turn, the affirmative answer to Question~\ref{ques-1}(ii) would
yield the solution of the following problem posed by Misha Kapovich and the second named author in 2006. It is
known \cite{pol3} that $\ham(\SPH^2)$ carries a one-parameter
subgroup, say $L:= \{f_t\}_{t \in \R}$, which is a quasi-geodesic
in the following sense: $||f_t||_H \geq c|t|$ for some $c>0$ and
all $t$. Given such a subgroup, put
$$A(L) := \sup_{\phi \in \ham(\SPH^2)} d_H (\phi,L)\;.$$

\noindent \begin{question} \label{ques-3} Is  $A(L)$ finite or
infinite?
\end{question}

The finiteness of $A(L)$ does
not depend on the specific quasi-geodesic one-parameter subgroup
$L$. Intuitively, the finiteness of $A(L)$ would yield
that the whole group $\ham(\SPH^2)$ lies in a tube of a finite
radius around $L$.

We claim that if $\ham(\SPH^2)$ admits a non-vanishing
$C^0$-continuous homogeneous quasi-morphism, which is
Lipschitz in Hofer's metric, then $A(L) = \infty$. Indeed, such a quasi-morphism would be independent from the Calabi quasi-morphism constructed in \cite{entpol}. But the existence of two independent homogeneous quasi-morphisms on $\ham(\SPH^2)$ which are Lipschitz with respect to Hofer's metric implies that $A(L)=\infty$: otherwise the finiteness of $A(L)$ would imply that Lipschitz homogeneous quasi-morphisms are determined by their restriction to $L$.

\subsection{Quasi-morphisms in higher dimensions} \label{subsec-hdpro}

Consider the following general question: given a homogeneous
quasi-morphism on $\ham(\Sigma^{2n},\omega)$, is it continuous in
$C^0$-topology?

The answer is positive, for instance, for quasi-morphisms coming
from the fundamental group $\pi_1(M)$ \cite{gg,Polt-Montreal}. It
would be interesting to explore, for instance,  the
$C^0$-continuity of a quasi-morphism $\mu$ given by the difference
of a Calabi quasi-morphism and the Calabi homomorphism
\cite{BEP,entpol} (or, more generally, by the difference of two
distinct Calabi quasi-morphisms). In order to prove the
$C^0$-continuity of $\mu$, one should establish a $C^0$-small
fragmentation lemma with a controlled number of factors in the
spirit of Lemma~\ref{lem-frag-hdim} for $\D^{2n}$ or
Theorem~\ref{lemma-N0-small-discs} for surfaces. It is likely that
the argument which we used for $\D^{2n}$ could go through without
great complications for certain Liouville symplectic manifolds,
that is compact exact symplectic manifolds which admit a
conformally symplectic vector field transversal to the boundary,
such as the open unit cotangent bundle of the sphere. 

Our result for $\D^{2n}$ should also allow the construction of continuous quasi-morphisms for groups of Hamiltonian diffeomorphisms of certain symplectic
manifolds symplectomorphic to ``sufficiently large" open subsets
of $\D^{2n}$ (for instance, the open unit cotangent bundle of a torus).

The $C^0$-small fragmentation problem on general
higher-dimen\-sio\-nal manifolds looks very difficult. Consider,
for instance, the following toy case: find a fragmentation with a
controlled number of factors for a $C^0$-small Hamiltonian
diffeomorphism supported in a sufficiently small ball $D \subset
\Sigma$. A crucial difference from the situation described in
Section~\ref{sec-highdim} is that we have no information about the
Hamiltonian isotopy $\{f_t\}$ joining $f$ with the identity: it
can ``travel" far away from $D$. In particular, when $\dim \Sigma
\geq 6$, we do not know whether $f$ lies in $\ham(D)$ or not. When
$\dim \Sigma=4$, the fact that $f\in \ham(D)$ (and hence the
fragmentation in our toy example) follows from a deep theorem by
Gromov based on pseudo-holomorphic curves techniques
\cite{Gromov}. It would be interesting to apply powerful methods
of four-dimensional symplectic topology to the $C^0$-small
fragmentation problem.

\section{Proof of the fragmentation
theorem}\label{sec-fragmentation}

In this section we prove Theorem~\ref{lemma-N0-small-discs}.
First, we need to remind a few classical results.

\subsection{Preliminaries}

In the course of the proof we will repeatedly use the following
result:

\begin{prop}
\label{prop-Moser}

Let $\Sigma$ be a compact connected oriented surface, possibly
with a non-empty boundary $\partial \Sigma$, and let $\omega_1$,
$\omega_2$ be two area-forms on $\Sigma$. Assume that $\int_\Sigma
\omega_1 = \int_\Sigma \omega_2 $. If $\partial\Sigma \neq
\emptyset$, we also assume that the forms $\omega_1$ and
$\omega_2$ coincide on $\partial\Sigma$.

Then there exists a diffeomorphism $f: \Sigma\to \Sigma$, isotopic
to the identity, such that $f^* \omega_2 = \omega_1$. Moreover,
$f$ can be chosen to satisfy the following properties:

\medskip \noindent (i) If $\partial\Sigma \neq \emptyset$, then $f$
is the identity on $\partial\Sigma$, and if $\omega_1$ and
$\omega_2$ coincide near $\partial\Sigma$, then $f$ is the
identity near $\partial\Sigma$.

\medskip \noindent (ii) If $\Sigma$ is partitioned into
polygons (with piecewise smooth boundaries), so that
$\omega_2-\omega_1$ is zero on the 1-skeleton $\Gamma$ of the
partition and the integrals of $\omega_1$ and $\omega_2$ over each
polygon are equal, then $f$ can be chosen to be the identity on
$\Gamma$.

\medskip \noindent (iii) The diffeomorphism $f$ can be chosen
arbitrarily $C^0$-close to $\identity$, provided $\omega_1$ and
$\omega_2$ are sufficiently $C^0$-close to each other (i.e.
$\omega_2 = \chi \omega_1$ for a function $\chi$ sufficiently
$C^0$-close to $1$).

\end{prop}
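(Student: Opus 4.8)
\textbf{Proof proposal for Proposition~\ref{prop-Moser}.}

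The plan is to run the standard Moser trick, being careful to track the extra conditions (i), (ii), (iii) by hand. First I would reduce to a normalized setting: write $\omega_t := (1-t)\omega_1 + t\omega_2$ for $t\in[0,1]$; since $\int_\Sigma \omega_1 = \int_\Sigma \omega_2$ and the $\omega_i$ are area-forms (hence everywhere positive), each $\omega_t$ is again an area-form with the same total area, and if $\partial\Sigma\neq\emptyset$ the $\omega_t$ all agree on $\partial\Sigma$ (indeed near $\partial\Sigma$ if the $\omega_i$ do). The form $\sigma := \omega_2-\omega_1$ is exact: it is a top-degree form of total integral zero on a surface, so by the de Rham / Stokes argument it equals $d\beta$ for some 1-form $\beta$. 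The classical Moser computation then produces the isotopy $f_t$ by integrating the time-dependent vector field $X_t$ defined by $i_{X_t}\omega_t = -\beta$, with $f_0=\identity$ and $f := f_1$ satisfying $f^*\omega_2 = \omega_1$.

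The content is in choosing $\beta$ appropriately for each refinement. For (i): if $\partial\Sigma\neq\emptyset$ and $\sigma$ vanishes on $\partial\Sigma$ — which it does, since $\omega_1=\omega_2$ there — one can choose the primitive $\beta$ to vanish on $\partial\Sigma$ as well (integrate $\sigma$ starting from the boundary, or observe $H^1(\Sigma,\partial\Sigma)$ pairs correctly); then $X_t=0$ on $\partial\Sigma$ and $f$ fixes $\partial\Sigma$ pointwise. If moreover $\omega_1=\omega_2$ in a neighborhood of $\partial\Sigma$, then $\sigma\equiv 0$ there, so $\beta$ may be taken to be zero near $\partial\Sigma$, giving $f=\identity$ near $\partial\Sigma$. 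For (ii): given the polygonal partition with $\sigma$ vanishing on the $1$-skeleton $\Gamma$ and $\int_P\sigma = 0$ for every polygon $P$, I would construct $\beta$ polygon-by-polygon: on each closed polygon $P$, $\sigma|_P$ is exact (top form, zero total integral) and $\sigma$ vanishes on $\partial P\subset\Gamma$, so choose a primitive $\beta_P$ on $P$ vanishing on $\partial P$; these glue to a global 1-form $\beta$ on $\Sigma$ vanishing on $\Gamma$, and hence $X_t$ and $f$ fix $\Gamma$. For (iii): if $\omega_2 = \chi\omega_1$ with $\chi$ $C^0$-close to $1$, then $\sigma = (\chi-1)\omega_1$ is $C^0$-small; choosing $\beta$ by a bounded primitive operator (a right inverse to $d$ on exact $2$-forms, which can be taken continuous in the $C^0\to C^0$ sense on a compact surface, e.g. via a fixed partition-of-unity/Hodge-type construction) makes $\beta$, hence $X_t$, hence $f=f_1$, $C^0$-close to the identity; one should phrase this with uniform constants so that $\|\chi-1\|_{C^0}\to 0$ forces $\mathrm{dist}(f,\identity)\to 0$.

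The main obstacle I expect is (iii): the bare Moser trick only gives $C^1$-control of $f$ in terms of $C^1$-control of $\sigma$, and we are asked for $C^0$-control from $C^0$-control of $\chi$. The point to handle carefully is the choice of primitive: one needs a right inverse $Q$ to $d$ (acting on exact $2$-forms) with a bound $\|Q\sigma\|_{C^0}\le \mathrm{const}\cdot\|\sigma\|_{C^0}$. On a compact surface this can be arranged — cover $\Sigma$ by finitely many discs, solve $d\beta_i=\rho_i\sigma$ on each with an explicit integral formula whose $C^0$ operator norm is controlled, and patch via a fixed partition of unity, correcting the overlaps (whose total integrals are small) by a further bounded operation. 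Then $\|\beta_t\|_{C^0}$ small and $\omega_t$ uniformly nondegenerate give $\|X_t\|_{C^0}$ small uniformly in $t$, and integrating over the unit time interval yields $\mathrm{dist}(f,\identity)$ small. The compatibility of this $C^0$-bounded primitive construction with the boundary condition (i) and the polygon condition (ii) needs a remark but presents no essential new difficulty, since those just amount to solving the primitive problem relative to a subcomplex on which $\sigma$ already vanishes.
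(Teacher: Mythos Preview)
Your proposal is correct and follows essentially the same approach as the paper: both run the standard Moser interpolation $\omega_t = (1-t)\omega_1 + t\omega_2$ and reduce each of (i)--(iii) to constructing a primitive of $\omega_2-\omega_1$ with the appropriate vanishing or $C^0$-smallness property. The paper simply cites \cite{Banyaga-boundary} for the primitive vanishing near $\partial\Sigma$ or on $\Gamma$, and Lemma~1 of \cite{Moser} (the explicit integral formula on a rectangle) for the $C^0$-bounded primitive, where you sketch these same constructions by hand via polygon-by-polygon gluing and a partition-of-unity argument.
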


The existence of $f$ in the case of a closed surface follows from
a well-known theorem of Moser \cite{Moser}. The method of the
proof (``Moser's method") can be outlined as follows. Set
$\omega_t:=\omega_1 + t(\omega_2-\omega_1)$ and note that the form
$\omega_2-\omega_1$ is exact. Choose a 1-form $\sigma$ so that
$d\sigma=\omega_2-\omega_1$ and define $f$ as the time-1 flow of
the vector field $\omega_t$-dual to $\sigma$. In order to show (i)
and (ii) one has to choose a primitive $\sigma$ for
$\omega_2-\omega_1$ that vanishes near $\partial\Sigma$ or,
respectively, on $\Gamma$ -- the construction of such a $\sigma$
can be easily extracted from \cite{Banyaga-boundary}. Property
(iii) is essentially contained in \cite{Moser}: it follows easily
from the above construction of $f$, provided we can construct a
$C^0$-small primitive $\sigma$ for a $C^0$-small exact 2-form
$\omega_2-\omega_1$, but, by Lemma 1 from \cite{Moser}, it
suffices to do it on a rectangle and in this case $\sigma$ can be
constructed explicitly.

In fact, a stronger result than (iii) is true. It is known, see
\cite{oh,sikorav}, that $f$ can be chosen $C^{0}$-close to the
identity as soon as the two area forms (considered as measures)
are close in the weak-$\ast$ topology. Note that if one of the two
forms is the image of the other by a diffeomorphism $C^{0}$-close
to the identity, the two forms are close in the weak-$\ast$
topology. However, to keep this text self-contained, we are not
going to use this fact, but will reprove directly the particular
cases we need.

We equip the surface $\Sigma$ with a fixed Riemannian metric and
denote by $d$ the corresponding distance. For any map
$f: X \to \Sigma$ (where $X$ is a closed subset of $\Sigma$) we
denote by $\| f\| := \max_{x} d (x, f(x))$ its $C^0$-norm.
Accordingly, the $C^0$-norm of a smooth function $u$ defined on a
closed subset of $\Sigma$ will be denoted by $\|u\|$.

The following lemmas are the main tools for the proof.

\begin{lemma}[Area-preserving extension lemma for discs]
\label{lemma-discs}

Let $D_1\subset D_2 \subset D\subset \R^2$ be closed discs such
that $D_1 \subset {\rm Interior}\ (D_2)\subset D_2\subset {\rm
Interior}\ (D)$. Let $\phi: D_2\to D$ be a smooth area-preserving
embedding (we assume $D$ is equipped with some area form). Then
there exists $\psi\in \ham (D)$ such that
$$ \left. \psi\right|_{D_1} = \phi \;\;\;\; {\it and} \;\;\;\; \| \psi \| \to 0 \;\; {\it as}\;\; \|\phi\|\to 0.$$
\end{lemma}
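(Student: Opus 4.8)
The goal is to produce, given an area-preserving embedding $\phi:D_2\to D$, a Hamiltonian diffeomorphism $\psi$ of $D$ agreeing with $\phi$ on $D_1$ and $C^0$-small when $\phi$ is $C^0$-small. The strategy is to interpolate between $\phi$ and the identity through a compactly supported diffeomorphism, and then correct the failure to be area-preserving using Moser's method (Proposition~\ref{prop-Moser}), exploiting property (iii) to keep the correction $C^0$-small.

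\medskip\noindent\textbf{Step 1: a smooth, not-yet-symplectic extension.}
First I would extend $\phi$ to a diffeomorphism $g$ of $D$ that equals $\phi$ on $D_1$, equals the identity near $\partial D$, and satisfies $\|g\|\to 0$ as $\|\phi\|\to 0$. The idea is to use a ``cutoff in the isotopy'' trick analogous to the one in the proof of Lemma~\ref{lem-frag-hdim}: since $\phi$ is $C^0$-close to the inclusion $D_2\hookrightarrow D$, it is isotopic to it through embeddings close to the inclusion (for instance by a straight-line homotopy $\phi_s$ in a chart, valid once $\|\phi\|$ is small, reparametrised so $\phi_s$ is the inclusion near $s=0$). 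One can then damp this isotopy down between $D_1$ and $D_2$ using a cutoff function supported in $D_2$ and equal to $1$ on $D_1$, obtaining a compactly supported diffeomorphism $g$ of $D$, isotopic to the identity, with $g=\phi$ on $D_1$ and $\|g\|$ controlled by $\|\phi\|$. Some care is needed to keep $g$ a genuine diffeomorphism (the cutoff must not destroy invertibility), which is standard when $\|\phi\|$ is small enough.

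\medskip\noindent\textbf{Step 2: Moser correction.}
Now $g$ need not be area-preserving, but $g^*\omega$ and $\omega$ agree near $\partial D$ and have equal total integral over $D$; moreover, since $g=\phi$ on $D_1$ and $\phi$ is area-preserving, $g^*\omega=\omega$ on $D_1$. Apply Proposition~\ref{prop-Moser} to $\omega_1:=g^*\omega$ and $\omega_2:=\omega$ on $D$, using a partition of $D$ into two polygons with $D_1$ in the interior of one of them so that $\Gamma$ avoids $D_1$; this yields $h:D\to D$, isotopic to the identity rel $\partial D$, with $h^*\omega = g^*\omega$ and $h=\mathrm{id}$ on $\overline{D_1}$. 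Crucially, $g^*\omega$ is $C^0$-close to $\omega$ when $\|\phi\|$ (hence $\|g\|$) is small — here one must note that $C^0$-closeness of $g$ to the identity gives closeness of $g^*\omega$ to $\omega$ as a $0$-form $\chi$ with $g^*\omega=\chi\omega$ — so by Proposition~\ref{prop-Moser}(iii), $\|h\|\to 0$ as $\|\phi\|\to 0$. Then $\psi := g\circ h^{-1}$ satisfies $\psi^*\omega=\omega$, $\psi=\phi$ on $D_1$, $\psi=\mathrm{id}$ near $\partial D$, and $\|\psi\|\le\|g\|+\|h\|\to 0$. Since $\psi$ is an area-preserving diffeomorphism of the disc $D$ that is the identity near the boundary and isotopic to the identity, it lies in $\ham(D)$ (every area-preserving diffeomorphism of a disc isotopic to the identity rel boundary is Hamiltonian, by the vanishing of compactly supported $H^1$).

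\medskip\noindent\textbf{Main obstacle.}
The delicate point is Step 1: producing the extension $g$ as a genuine diffeomorphism of $D$ whose $C^0$-norm is controlled by $\|\phi\|$, while being the identity near $\partial D$ and exactly $\phi$ on $D_1$. One must check that the cutoff interpolation does not ruin injectivity — this is where the hypothesis that $D_1$ is compactly contained in the interior of $D_2$, itself compactly contained in $D$, is used, so there is genuine room to interpolate, and one argues that for $\|\phi\|$ below a threshold depending only on the geometry of $D_1\subset D_2\subset D$ the interpolation has everywhere nonsingular differential. The quantitative control ``$\|\psi\|\to 0$ as $\|\phi\|\to 0$'' then follows by tracking the $C^0$-estimates through Steps 1 and 2; the estimate in Step 2 is essentially Proposition~\ref{prop-Moser}(iii) applied verbatim.
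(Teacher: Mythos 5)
Your overall architecture (smooth extension first, Moser correction second) matches the paper's, but both of your steps rest on the false premise that $C^0$-smallness of $\phi$ gives some control on $d\phi$, and this is exactly where the real difficulty lies. In Step 1, the damped interpolation $g(x)=x+\chi(x)(\phi(x)-x)$ has differential $(1-\chi)I+\chi\, d\phi+(\phi-\mathrm{id})\otimes d\chi$; the last term is $O(\|\phi\|)$, but the convex combination $(1-\chi)I+\chi\, d\phi$ can be singular no matter how small $\|\phi\|$ is. For instance, an area-preserving $\phi$ rotating a tiny disc by $180$ degrees is $C^0$-small, yet $d\phi=-I$ at the centre and the interpolation degenerates at $\chi=1/2$ (already the straight-line isotopy $\phi_s$ fails to be through embeddings there). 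So ``standard when $\|\phi\|$ is small enough'' is not true; producing the smooth extension with only $C^0$-hypotheses is precisely the content of Lemma~\ref{lemma-smooth} together with Khanevsky's Lemma~\ref{lemma-curves-khanevsky}, which first match the image curves and only then cut off, in a collar where the map has been arranged to preserve the horizontal foliation.

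The second gap is in Step 2: you claim that $C^0$-closeness of $g$ to the identity makes $g^*\omega=\chi\omega$ with $\chi$ $C^0$-close to $1$. This is false: $\chi=\det(dg)$ pointwise, and a $C^0$-small $g$ may compress a ball of radius $\delta\ll\epsilon$ onto one of radius $\delta^2$, so $\chi$ can be arbitrarily far from $1$. Hence Proposition~\ref{prop-Moser}(iii) does not apply and the Moser flow $h$ need not be $C^0$-small; the two forms are close only in the weak-$\ast$ sense. This is exactly why the proof of Lemma~\ref{lemma-annuli} is long: it equalizes $f^*\omega$ and $\omega$ on the $1$-skeleton of a partition into squares of side $\epsilon^{1/4}$, matches the integrals over each square by a genuinely $C^0$-small perturbation $\varrho\omega$, applies Moser square by square (where the $C^0$-norm of the correction is bounded by the mesh, not by closeness of the forms), and only at the last step invokes part (iii) for the pair $(\omega,\varrho\omega)$. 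To close your Step 2 you would need to import that whole mechanism, or simply reduce to the annulus lemma on a collar of $\partial D_2$ as the paper does.
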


\begin{lemma}[Area-preserving extension lemma for rectangles]
\label{lemma-rectangles}

Let $\Pi = [0,R]\times [-c,c]$ be a rectangle and let
$\Pi_1\subset \Pi_2\subset \Pi$ be two smaller rectangles of the
form $\Pi_i = [0,R]\times [-c_i, c_i]$ ($i=1,2$), $0< c_1<c_2<c$.
Let $\phi: \Pi_2\to \Pi$ be an area-preserving embedding (we
assume $\Pi$ is equipped with some area form) such that
\begin{itemize}

\item{} $\phi$ is the identity near $0\times [-c_2, c_2]$ and $R\times
[-c_2, c_2]$.

\item{} The area in $\Pi$ bounded by the curve $[0,R]\times y$ and
its image under $\phi$ is zero for some (and hence for all) $y\in
[-c_2, c_2]$.
\end{itemize}

\noindent Then there exists $\psi\in \ham (\Pi)$ such that
$$\left. \psi \right|_{\Pi_1} = \phi\;\;\;\; {\it and} \;\;\;\;  ||\psi ||\to 0 \;\; {\rm as}\;\; ||\phi||\to 0.$$
\end{lemma}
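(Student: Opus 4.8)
The plan is to first build an auxiliary, not-yet-area-preserving, $C^0$-small extension $g$ of $\phi$ to all of $\Pi$, and then repair $g$ into an element of $\ham(\Pi)$ by Moser's method, arranged so that the repair leaves $\Pi_1$ pointwise fixed. (Throughout one works in the regime where $\|\phi\|$ is small, which is the only one relevant for the applications.) First I would choose $\epsilon_0>0$ so that $\|\phi\|<\epsilon_0$ keeps $\phi(\Pi_2)$ away from the horizontal sides $[0,R]\times\{\pm c\}$ of $\Pi$. Since $\phi$ is the identity near the vertical sides of $\Pi_2$, the set $\phi(\Pi_2)$ is a topological rectangle with its two vertical edges on $\partial\Pi$, so $\Pi\setminus\mathrm{Int}\,\phi(\Pi_2)$ has exactly two components: an upper one $A_+$, bounded below by $\gamma_+:=\phi([0,R]\times\{c_2\})$ and above by $[0,R]\times\{c\}$, and a lower one $A_-$; each is a disc lying within Hausdorff distance $\|\phi\|$ of the corresponding strip $T_+:=[0,R]\times[c_2,c]$, resp.\ $T_-:=[0,R]\times[-c,-c_2]$. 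I would then extend $\phi$ across $T_\pm$ by an explicit fiberwise interpolation between $\gamma_\pm$ and the horizontal edge, obtaining a diffeomorphism $g:\Pi\to\Pi$ with $g=\phi$ on $\Pi_2$, $g=\mathrm{id}$ near $\partial\Pi$, $g(T_\pm)=A_\pm$, and $\|g\|\to 0$ as $\|\phi\|\to 0$; here the hypothesis that $\phi$ is the identity near the vertical sides is exactly what makes the interpolation the identity near the vertical sides of $\Pi$.

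Now set $\omega':=g^*\omega$; then $\omega'$ and $\omega$ are area forms on $\Pi$ which coincide on $\Pi_2$ (where $g=\phi$ and $\phi^*\omega=\omega$) and near $\partial\Pi$. The zero-flux hypothesis is used at precisely one point: matching of total areas over the two components of $\Pi\setminus\mathrm{Int}\,\Pi_2$. Since $g(T_+)=A_+$, one has $\int_{T_+}\omega'=\int_{A_+}\omega$, and $\int_{A_+}\omega-\int_{T_+}\omega$ equals the signed area between $\gamma_+$ and $[0,R]\times\{c_2\}$, i.e.\ the flux of $\phi$ through the segment $y=c_2$, which vanishes; similarly over $T_-$. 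Hence $\omega$ and $\omega'$ have equal integrals over each component of $\Pi\setminus\mathrm{Int}\,\Pi_2$, which is precisely what is needed to run Moser's method on those two strips separately. Applying Proposition~\ref{prop-Moser} on $T_+$ and $T_-$ yields a diffeomorphism $h:\Pi\to\Pi$ with $h^*\omega=\omega'$, equal to the identity on $\Pi_2$ and near $\partial\Pi$, and --- the crucial point --- with $\|h\|\to 0$ as $\|\phi\|\to 0$.

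Finally I would set $\psi:=g\circ h^{-1}$. Then $\psi^*\omega=(h^{-1})^*(g^*\omega)=(h^{-1})^*\omega'=\omega$ (the last step because $h^*\omega=\omega'$), so $\psi$ is area-preserving; $h^{-1}=\mathrm{id}$ on $\Pi_2\supset\Pi_1$ gives $\psi|_{\Pi_1}=g|_{\Pi_1}=\phi|_{\Pi_1}$; and $\psi=\mathrm{id}$ near $\partial\Pi$. Since $\Pi$ is a disc, a compactly supported area-preserving diffeomorphism like $\psi$ automatically lies in $\ham(\Pi)$, and $\|\psi\|\le\|g\|+\|h\|\to 0$. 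This would complete the proof, modulo the two $C^0$-estimates.

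The genuine content --- and the step I expect to be the main obstacle --- is the $C^0$-control in the Moser step. The difficulty is that $\phi$, hence $g$, is only $C^0$-small and not $C^1$-small, so $\omega'=g^*\omega$ need not be $C^0$-close to $\omega$ and Proposition~\ref{prop-Moser}(iii) is not directly applicable; what does hold is that $\omega$ and $g^*\omega$ are close in the weak-$\ast$ topology of measures --- their masses on any sub-box differ by $O(\|g\|)$, being a difference of areas of $C^0$-close regions --- which is exactly the weak-$\ast$ strengthening of Moser's theorem noted above (after Proposition~\ref{prop-Moser}). Since we prefer to keep the paper self-contained, rather than invoke that strengthening one has to reprove the case at hand directly: on each strip, construct a primitive $\sigma$ of the exact $2$-form $\omega'-\omega$ whose $C^0$-norm is controlled by $\|\phi\|$ --- the naive primitive $g^*\lambda-\lambda$ (for a primitive $\lambda$ of $\omega$) has all its periods equal to areas swept by $g$, hence $O(\|\phi\|)$, and one redistributes it, via an explicit rectangular primitive formula, so that its pointwise norm becomes small --- and then verify that the ensuing Moser flow is $C^0$-small. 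The companion estimate --- that the fiberwise extension $g$ across $T_\pm$ can be kept both a diffeomorphism and $C^0$-small even when the curve $\gamma_\pm$ oscillates badly --- is of the same nature as a difficulty already present in Lemma~\ref{lemma-discs}, and I would deal with it in parallel.
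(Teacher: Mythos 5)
Your overall architecture --- a $C^0$-small smooth (non-area-preserving) extension $g$ of $\phi$, followed by a Moser correction $h$ that is the identity on $\Pi_2$, followed by $\psi=g\circ h^{-1}$ --- is exactly the skeleton of the paper's argument: the paper proves Lemma~\ref{lemma-rectangles} by a one-line reduction to the annulus version (Lemma~\ref{lemma-annuli}), whose proof has precisely this structure, with the smooth extension supplied by Lemma~\ref{lemma-smooth} and the appendix on curves. You also correctly isolate the crux: $g$ is only $C^0$-small, so $\omega'=g^*\omega$ need not be $C^0$-close to $\omega$ and Proposition~\ref{prop-Moser}(iii) does not apply. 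But your proposed resolution of that crux does not work as stated. Even if you produce a primitive $\sigma$ of $\omega'-\omega$ with $\|\sigma\|=O(\|\phi\|)$, the Moser vector field is the $\omega_t$-dual of $\sigma$, where $\omega_t=(1-t)\omega+t\omega'$; its pointwise norm is $|\sigma|$ divided by the density of $\omega_t$, and that density has no positive lower bound in terms of $\|\phi\|$ alone, since nothing controls the first derivatives of $\phi$. Wherever $g$ expands strongly, $\omega'$ has tiny density and the time-one flow can transport points a long way, so ``$C^0$-small primitive $\Rightarrow$ $C^0$-small Moser flow'' is false in this generality --- it is the same trap that makes part (iii) of Proposition~\ref{prop-Moser} inapplicable. (Separately, the ``explicit rectangular primitive'' you allude to, obtained by integrating the density along vertical fibers, is controlled by line integrals of $\omega'-\omega$ along fibers, not by its mass on rectangles, so its $C^0$-norm is also not small a priori.)

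The paper's way around this is combinatorial rather than analytic: subdivide each strip into squares of side $r=\epsilon^{1/4}$; first correct $\Omega=g^*\omega$ by $C^0$-small diffeomorphisms supported near the $1$-skeleton so that it agrees with $\omega$ there; then replace $\omega$ by $\varrho\omega$ with $\varrho=1+O(\epsilon^{3/4})$ so that $\varrho\omega$ and the corrected $\Omega$ have equal integral over every square; apply Moser (part (ii)) to get a diffeomorphism that is the identity on the skeleton --- hence maps each square to itself and is automatically $C^0$-small with norm comparable to $r$, no matter how wild the two forms are inside each square; and finally apply Moser (part (iii)) to the genuinely $C^0$-close pair $\omega$ and $\varrho\omega$. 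The zero-flux hypothesis enters, as you say, only to equate the total integrals over the two strips. To complete your write-up you would need to replace your primitive-and-flow estimate by an argument of this kind, or else invoke the weak-$\ast$ version of Moser's theorem (Oh, Sikorav), which the paper deliberately avoids in order to stay self-contained.
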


The lemmas will be proved in Section~\ref{sec-extension-lemmas}.

\subsection{Construction of the fragmentation}

We are now ready to prove the fragmentation theorem. In the case
when $\Sigma$ is the closed unit disc $\D^2$ in $\R^{2}$ the
theorem has been proved by Le Roux \cite{leroux2} (Proposition
4.2). In general, our proof relies on the case of the disc.

For any $b>0$ we fix a neighborhood $\mathscr{U}_{0}(b)$ of the
identity in $\ham(\D^2)$ and an integer $N_{0}(b)$ such that every
element of $\mathscr{U}_{0}(b)$ is a product of at most $N_{0}(b)$
diffeomorphisms supported in discs of area at most $b$. We will prove the following assertion.

\medskip

{\it

For any $\epsilon>0$ there exists a neighborhood $\mathscr{V}
(\epsilon)$ of the identity in $\ham(\Sigma)$, an integer $N_{1}
(\epsilon)$ and $N_{1}(\epsilon)$ discs $(D_{j})_{1\le j \le
N_{1}(\epsilon)}$ in $\Sigma$ such that any diffeomorphism $f \in
\mathscr{V} (\epsilon)$ can be written as a product $f=g_{1}\cdot
\ldots \cdot g_{N_{1}(\epsilon)}$, where each $g_{i}$ belongs to
$\ham (D_j)$ for one of the discs $D_{j}$ and is $\epsilon$-close
to the identity. ($\ast$)}

\medskip

Note that there is no restriction in $(\ast)$ on the areas of the discs
$D_j$. Let us explain how to conclude the proof of
Theorem~\ref{lemma-N0-small-discs} from this assertion. Fix $a>0$.
We can choose, for each $i$ between $1$ and $N_{1}(\epsilon)$, a
conformally symplectic diffeomorphism $\psi_{i} : \D^2 \to D_{i}$,
so that the pull-back of the area form on $\Sigma$ by $\psi_{i}$
equals the standard area form on the disc $\D^2$ times some
constant $\lambda_i>0$. If $\epsilon$ is sufficiently small, $\psi_{i}^{-1}g_{i}\psi_{i}$ is in $\mathscr{U}_{0}(\frac{a}{\lambda_{i}})$ for each $i$ and we can apply the
result for the disc to it. This
concludes the proof.

\begin{remark}
{\rm It is important that the discs $D_{i}$ as well as the maps
$\psi_{i}$ are chosen in advance, since we need the neighborhoods
$\psi_{i}\mathscr{U}_{0}(\frac{a}{\lambda_{i}})\psi_{i}^{-1} $ to
be known in advance. They determine the
neighborhood $\mathscr{V} (\epsilon)$.}
\end{remark}

We now prove $(\ast)$. The arguments we use are inspired from the
work of Fathi \cite{fathi}. Fix $\epsilon >0$. We distinguish
between two cases: 1) $\Sigma$ has a boundary, and 2) $\Sigma$ is
closed.

\medskip

\noindent {\bf First case.} Any compact connected surface with
non-empty boundary can be obtained by gluing finitely many $1$-handles to a
disc. We prove the statement $(\ast)$ by induction on the number
of $1$-handles. We already know that $(\ast)$ is true for a
disc (just take $N_1(\epsilon)=1$ and let $D_1$ be the whole
disc). Assume now that $(\ast)$ holds for any compact surface with
boundary obtained by gluing $l$ $1$-handles to the disc. Let
$\Sigma$ be a compact surface obtained by gluing a $1$-handle to a
compact surface $\Sigma_{0}$, where $\Sigma_{0}$ is obtained from
the disc by gluing $l$ $1$-handles.

Choose a diffeomorphism (singular at the corners) $\varphi : [-1,1]^{2} \to \overline{\Sigma
-\Sigma_{0}}$, sending $[-1,1]\times \{-1,1\}$ into the boundary
of $\Sigma_{0}$. Let $\Pi_{r}=\varphi([-1,1]\times [-r,r])$. Let $\mathscr{V}_1
(\epsilon)$ be the neighborhood of the identity in $\ham
(\Sigma_1)$, given by $(\ast)$ applied to the surface
$\Sigma_{1}:=\Sigma_{0}\cup \varphi([0,1]\times \{s, \vert s \vert
\ge \frac{1}{4}\})$, and let $N_{1}(\epsilon)$ be the
corresponding integer.

Let $f\in \ham (\Sigma)$, $\|f\|<\epsilon$. We apply
Lemma~\ref{lemma-rectangles} to the chain of rectangles
$\Pi_{\frac{1}{2}}\subset \Pi_{\frac{3}{4}}\subset \Pi_{\frac{7}{8}}$ and to
the restriction of $f$ to $\Pi_{\frac{3}{4}}$ (the hypothesis on
the curve $[-1,1]\times \{y\}$ is met because $f$ is Hamiltonian).
We obtain a diffeomorphism $\psi$ supported in $\Pi_{\frac{7}{8}}$ and
$C^0$-close to the identity, which coincides with $f$ on
$\Pi_{\frac{1}{2}}$. Hence, we can write
$$f=\psi h,$$
where $h$ is supported in $\Sigma_{1}$.  Since $f\in\ham(\Sigma)$
and $\psi\in\ham(\Pi_{\frac{7}{8}})$, we get that $h$ is Hamiltonian in
$\Sigma$. Since $H^{1}_{comp}(\Sigma_{1},\R)$ embeds in
$H^{1}_{comp}(\Sigma,\R)$, it means that $h$ actually belongs to
$\ham (\Sigma_1)$.

Define a neighborhood $\mathscr{V} (\epsilon)$ of the identity in
$\ham (\Sigma)$ by the following condition: $f\in \mathscr{V}
(\epsilon)$, if, first, $\|\psi\| < \epsilon$ (recall that when
$f$ converges to the identity, so does $\psi$) and, second, $h\in
\mathscr{V}_1 (\epsilon)$. Hence, if $f\in \mathscr{V}
(\epsilon)$, we can write it as a product of $N_1 (\epsilon) +1$
diffeomorphisms $g_i$, where each $g_i$ is $\epsilon$-close to the
identity and belongs to $\ham (D_j)$ for some disc $D_j\subset
\Sigma$. This proves the claim $(\ast)$ for $\Sigma$ in the first
case.

\medskip

\noindent {\bf Second case.} The surface $\Sigma$ is closed -- we
view it as a result of gluing a disc to a surface $\Sigma_{0}$
with one boundary component. Choose a diffeomorphism $\varphi :
\D^2 \to \overline{\Sigma -\Sigma_{0}}$ sending the boundary of
$\D^2$ into the boundary of $\Sigma_{0}$. Denote by $D_r$ the image by $\varphi$ of the disc of radius $r\in
[0,1]$ in $\D^2$. Let $\mathscr{V}_{1}(\epsilon)$ be the neighborhood
of the identity given by $(\ast)$ applied to the surface
$\Sigma_{1}:=\Sigma_{0}\cup \varphi(\{z\in \D^2, \vert z \vert \ge
\frac{1}{4}\})$ and let $N_{1}(\epsilon)$ be the corresponding
integer -- recall that in the first case above we have already
proved $(\ast)$ for $\Sigma_1$, which is a surface with boundary.

Let $f\in \ham (\Sigma)$, $\|f\|<\epsilon$. We apply
Lemma~\ref{lemma-discs} to the chain of discs
$D_{\frac{1}{2}}\subset D_{\frac{3}{4}}\subset D_{1}$ and to the
restriction of $f$ to $D_{\frac{3}{4}}$. We
obtain a diffeomorphism $\psi$ supported in $D_{1}$ and close to
the identity which coincides with $f$ on $D_{\frac{1}{2}}$. Hence,
we can write
$$f=\psi h,$$
where $h$ is supported in $\Sigma_{1}$. Since $f\in\ham(\Sigma)$
and $\psi\in\ham(D_1)$, we get that $h$ is Hamiltonian in
$\Sigma$. Since $\Sigma_{1}$ has one boundary component,
$H^{1}_{comp}(\Sigma_{1},\R)$ embeds in $H^{1}_{comp}(\Sigma,\R)$,
so $h$ actually belongs to $\ham (\Sigma_1)$. One concludes the proof as in the first case.

This finishes the proof of Theorem~\ref{lemma-N0-small-discs}
(modulo the proofs of the extension lemmas).

\subsection{Extension lemmas}
\label{sec-extension-lemmas}

The area-preserving lemmas for discs and rectangles will follow
from the following:

\begin{lemma}[Area-preserving extension lemma for annuli]
\label{lemma-annuli}

Let $\A = S^1 \times [-3,3]$ be a closed annulus and let $\A_1 =
S^1\times [-1,1], \A_2= S^1\times [-2,2]$ be smaller annuli inside
$\A$. Let $\phi$ be an area-preserving embedding of a fixed open
neighborhood of $\A_1$ into $\A_2$ (we assume that $\A$ is
equipped with some area form $\omega$), so that for some $y\in
[-1,1]$ (and hence for all of them) the curves $S^1\times y$ and
$\phi(S^1\times y)$ are homotopic in $\A$ and
\begin{equation}
\label{eqn-area-condition} {\it the\ area\ in}\ \A \ {\it bounded\
by}\ S^1\times y\ {\it and}\ \phi(S^1\times y)\ {\it is}\ 0.
\end{equation}

\noindent Then, there exists $\psi\in \ham (\A)$ such that $\left.
\psi\right|_{\A_1} = \phi$ and $\|\psi\|\to 0$ as $\|\phi\|\to 0$.

Moreover, if for some arc $I\subset S^1$ we have that $\phi = \identity$
outside a quadrilateral $I\times [-1,1]$ and $\phi (I\times
[-1,1])\subset I\times [-2,2]$, then $\psi$ can be chosen to be
the identity outside $I\times [-3,3]$.

\end{lemma}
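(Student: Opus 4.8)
The plan is to prove Lemma~\ref{lemma-annuli} by a ``straightening then Moser'' argument. Parametrize the annulus $\A = S^1 \times [-3,3]$ with coordinates $(x, y)$, $x \in S^1 = \R/\Z$, and write the area form as $\omega = \rho(x,y)\, dx\wedge dy$ for a positive function $\rho$. The first step is to isotope $\phi$ to a map whose ``vertical fibers are respected'' in a suitable sense. More precisely, consider the function $h(x) := $ (signed $\omega$-area between the curve $S^1\times\{0\}$ and $\phi(S^1\times\{0\})$, cut off at the vertical line over $x$); condition \eqref{eqn-area-condition} says $h$ closes up to a function on $S^1$. Using this one builds an area-preserving diffeomorphism $\sigma$ of $\A$, supported near $\A_1$ and $C^0$-small when $\phi$ is $C^0$-small, such that $\sigma^{-1}\phi$ maps each horizontal curve $S^1\times\{y\}$ (for $y$ near $0$) to a curve enclosing zero $\omega$-area with $S^1\times\{y\}$. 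At this point I would cite Proposition~\ref{prop-Moser}(iii) (and the weak-$*$ closeness remark following it) as the engine: an area-preserving embedding of a neighborhood of $\A_1$ into $\A_2$ that preserves the ``area function'' is, after composition with the vertical shear, $C^0$-close to a map that extends by a Moser-type interpolation between $\phi$ on $\A_1$ and the identity near $\partial\A$.

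Concretely, the key steps in order: (1) Reduce to the model area form $dx\wedge dy$ by Moser's theorem, tracking $C^0$-estimates, so WLOG $\omega = dx\wedge dy$. (2) Extend $\phi$ from a neighborhood of $\A_1$ to all of $\A$ as a smooth embedding $\bar\phi$ which is the identity near $S^1\times\{\pm 3\}$ and $C^0$-close to the identity (this is soft — cut-off of the isotopy joining $\phi$ to the identity through embeddings, using that $\phi(\A_1)\subset\A_2$ leaves room). The map $\bar\phi$ need not be area-preserving, but $\bar\phi^*\omega$ is an area form agreeing with $\omega$ near the boundary, with the same total area, and $C^0$-close to $\omega$. (3) Apply Proposition~\ref{prop-Moser}, parts (i) and (iii): there is $g \in \ham(\A)$, identity near $\partial\A$, $C^0$-small, with $g^*(\bar\phi^*\omega) = \omega$; then $\psi := \bar\phi \circ g$ is area-preserving, equals the identity near $\partial\A$, and is $C^0$-small. (4) Check $\psi \in \ham(\A)$ rather than merely $\symp$: since $\A$ is an annulus, $\ham(\A)$ has codimension one in $\cD(\A)$, the obstruction being the flux/area swept; but the area condition \eqref{eqn-area-condition} is exactly what kills this flux class, so $\psi$ lies in $\ham(\A)$. (5) Arrange $\psi|_{\A_1} = \phi$ exactly: in step (2) choose $\bar\phi$ to literally equal $\phi$ on $\A_1$, and in step (3) choose the Moser primitive to vanish on $\A_1$ — possible because $\bar\phi^*\omega = \omega$ already on the neighborhood of $\A_1$ where $\bar\phi = \phi$, using Proposition~\ref{prop-Moser}(ii) with $\A_1$ playing the role of part of the $1$-skeleton.

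For the ``moreover'' clause, everything is carried out supported inside the quadrilateral $I\times[-3,3]$: when $\phi = \identity$ off $I\times[-1,1]$ and $\phi(I\times[-1,1])\subset I\times[-2,2]$, the extension $\bar\phi$ in step (2) can be taken to equal the identity off $I\times[-3,3]$ (there is enough vertical room to absorb the isotopy in the $[-2,2]$-to-$[-3,3]$ collar while staying over $I$), and the Moser primitive in step (3) is supported in $I\times[-3,3]$ as well since $\bar\phi^*\omega - \omega$ vanishes off that quadrilateral; hence $\psi$ is the identity off $I\times[-3,3]$.

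The main obstacle I expect is step (3)–(5): making the $C^0$-estimate in Moser's theorem interact cleanly with the requirement that $\psi|_{\A_1} = \phi$ \emph{exactly} while $\psi$ is \emph{globally} $C^0$-small and \emph{globally} area-preserving. One must choose the extension $\bar\phi$ and the primitive $\sigma$ of $\bar\phi^*\omega - \omega$ simultaneously so that $\sigma \equiv 0$ on $\A_1$, $\sigma$ is supported in (the given quadrilateral, in the ``moreover'' case), and $\|\sigma\|_{C^0} \to 0$ as $\|\phi\|_{C^0}\to 0$; the last point is where one genuinely needs the remark after Proposition~\ref{prop-Moser} that $C^0$-closeness of $\bar\phi$ to the identity forces weak-$*$ closeness of $\bar\phi^*\omega$ to $\omega$, hence $C^0$-smallness of an appropriately chosen primitive via the explicit primitive construction on rectangles from \cite{Moser}. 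The homotopy hypothesis on $S^1\times\{y\}$ and $\phi(S^1\times\{y\})$ is used precisely to ensure the extension $\bar\phi$ can be taken isotopic to the identity rel boundary, so that the flux computation in step (4) is meaningful.
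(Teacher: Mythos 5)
Your overall skeleton -- first extend $\phi$ to a smooth ($C^0$-small, boundary-compatible) embedding $\bar\phi$ of all of $\A$, then correct the area distortion by a Moser argument that is the identity on $\A_1$ -- is exactly the paper's strategy. But there is a genuine gap at the central analytic step, and it is precisely the step you flag as "the main obstacle." You invoke Proposition~\ref{prop-Moser}(iii) to produce a $C^0$-small $g$ with $g^*(\bar\phi^*\omega)=\omega$. Part (iii) requires $\bar\phi^*\omega=\chi\omega$ with $\chi$ $C^0$-close to $1$, i.e.\ pointwise control on the Jacobian of $\bar\phi$. A diffeomorphism that is merely $C^0$-close to the identity can have a wildly oscillating Jacobian, so $\bar\phi^*\omega$ need \emph{not} be $C^0$-close to $\omega$; it is only close in the weak-$*$ sense. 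Your fallback -- weak-$*$ closeness plus the explicit primitive construction on rectangles from \cite{Moser} -- does not repair this: the explicit rectangle primitive is $C^0$-small only when the $2$-form itself is $C^0$-small, which is exactly what fails here. The weak-$*$ version of Moser with $C^0$-control on the resulting diffeomorphism is the nontrivial Oh--Sikorav result, which the paper explicitly states it will \emph{not} use and instead reproves in the special case at hand.

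The paper closes this gap with a quantitative grid argument that your proposal is missing. On each of $\A_\pm$ it subdivides into squares $K_i$ of side $r=\epsilon^{1/4}$; first makes $f^*\omega$ agree with $\omega$ on the $1$-skeleton (Darboux charts at vertices, an explicit vertical flow along edges); then observes that because $\|f\|\leq C\epsilon$ and $r\gg\epsilon$, the relative area discrepancy of each square is $O(\epsilon/r)=O(\epsilon^{3/4})$, so the square areas can be matched by a multiple $\varrho\omega$ with $\varrho$ $C^0$-close to $1$ (using bump functions with $\|\bar\rho_i\|_{C^2}=O(\epsilon^{-1/2})$ and coefficients $t_i=O(\epsilon^{3/4})$); only \emph{then} does Proposition~\ref{prop-Moser}(ii) apply square-by-square (the $C^0$-bound coming from the square diameter $r\to 0$, not from closeness of the forms) and Proposition~\ref{prop-Moser}(iii) apply to the genuinely $C^0$-close pair $(\omega,\varrho\omega)$. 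The overall total-area matching on $\A_\pm$ needed to start this uses condition \eqref{eqn-area-condition}, as you correctly identify. A secondary understatement: your step (2) is not "soft" -- producing $\bar\phi$ with $\|\bar\phi\|\leq C\epsilon$ requires straightening the image curves $\phi(S^1\times\{\pm 1.5\})$ by an ambient diffeomorphism of controlled $C^0$-norm, which is the content of the smooth extension lemma and of Khanevsky's curve extension lemma in the appendix.
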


Let us show how this lemma implies the area-preserving extension
lemmas for discs and rectangles.

\bigskip
\noindent {\bf Proof of Lemma~\ref{lemma-discs}.}

Up to replacing $D_2$ by a slightly smaller disc, we can assume
that $\phi$ is defined in a neighborhood of $D_2$. Identify some
small neighborhood of $\partial D_2$ with $\A = S^1 \times [-3,3]$
so that $\partial D_2$ is identified with $S^1\times
0\subset\A_1\subset\A_2\subset\A$ and $\phi (\A_1)\subset {\rm
Interior}\ (\A_2)\subset \A\subset {\rm Interior}\ (D)\setminus
\phi(D_1)$.

Apply Lemma~\ref{lemma-annuli} and find $h\in \ham (\A)$, $\|
h\|\to 0$ as $\epsilon\to 0$, so that $\left. h\right|_{\A_1} =
\phi$. Set $\phi_1 := h^{-1}\circ \phi\in \ham (D)$. Note that
$\left.\phi_1\right|_{D_1} = \phi$ and $\phi_1$ is the identity on
$\A_1$. Therefore we can extend
$\left.\phi_1\right|_{D_2\cup\A_1}$ to $D$ by the identity and get
the required $\psi$. \Qed

\bigskip
\noindent {\bf Proof of Lemma~\ref{lemma-rectangles}.}

Identify the rectangles $\Pi_1\subset \Pi_2\subset \Pi$ -- by a
diffeomorphism -- with quadrilaterals $I\times [-1,1]\subset
I\times [-2,2]\subset I\times [-3,3]$ in the annulus $\A=S^1\times
[-3,3]$ for some suitable arc $I\subset S^1$ and apply
Lemma~\ref{lemma-annuli}.\Qed

In order to prove Lemma~\ref{lemma-annuli}, we first need to prove
a version of the lemma concerning smooth (not necessarily
area-preserving) embeddings.

\begin{lemma}[Smooth extension lemma]

\label{lemma-smooth}

Let $\A_1\subset \A_2\subset\A$ be as in
Lem\-ma~\ref{lemma-annuli}. Let $\phi$ be a smooth embedding of a fixed
open neighborhood of $\A_1$ into $\A_2$, isotopic to the identity,
such that $\|\phi\| \leq \epsilon$ for some $\epsilon>0$. Then
there exists $\psi\in \dif (\A)$ such that $\psi$ is supported in
$\A_2$, $\left. \psi\right|_{\A_1} = \phi$, and $\|\psi\|\leq
C\epsilon$, for some $C>0$, independent of $\phi$.

 Moreover, if $\phi = \identity$ outside a quadrilateral $I\times [-1,1]$
and $\phi (I\times [-1,1])\subset I\times [-2,2]$ for some arc
$I\subset S^1$, then $\psi$ can be chosen to be the identity
outside $I\times [-3,3]$.

\end{lemma}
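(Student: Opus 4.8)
The plan is to prove Lemma~\ref{lemma-smooth} first, since it is the purely topological (non-area-preserving) skeleton on which the area-preserving annulus lemma, and hence the disc and rectangle lemmas, will be built. The statement is an \emph{extension-with-$C^0$-control} result: given a small embedding $\phi$ of a neighborhood of $\A_1$ into $\A_2$, isotopic to the identity, we want a diffeomorphism $\psi$ of $\A$, supported in $\A_2$, agreeing with $\phi$ on $\A_1$, with $\|\psi\|\le C\epsilon$.

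\medskip

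\textbf{Step 1: Reduce to the universal cover / work with a displacement.} Since $\phi$ is isotopic to the identity and $C^0$-small, I would first lift everything to the strip $\R\times[-3,3]$, covering the annulus $\A = S^1\times[-3,3] = (\R/\Z)\times[-3,3]$. A $C^0$-small embedding of a neighborhood of $S^1\times[-1,1]$ which is homotopic to the identity lifts to a $C^0$-small embedding of a neighborhood of $\R\times[-1,1]$ which is $C^0$-close to the inclusion. (The assumption $\|\phi\|\le\epsilon$, with $\epsilon$ small relative to the ``width'' $1$ of the gap between the nested annuli, is exactly what lets us do this unambiguously.) Everything downstairs will be $\Z$-periodic, so the constructed $\psi$ will descend.

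\medskip

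\textbf{Step 2: Interpolate the embedding with the identity using a cutoff in the radial variable.} Write the lifted map as $\widetilde\phi(x,y) = (x,y) + v(x,y)$ where $v$ is a $C^0$-small (size $\le\epsilon$) vector field defined on a neighborhood of $\R\times[-1,1]$. Pick a smooth cutoff $\chi:[-3,3]\to[0,1]$ with $\chi\equiv 1$ on $[-1,1]$ and $\chi\equiv 0$ near $\pm 2$ (so that the result is supported in $\A_2$), and form the map $\Psi(x,y):=(x,y)+\chi(y)\,\widetilde v(x,y)$, where $\widetilde v$ is a suitable extension of $v$ to all of $\R\times[-3,3]$ — for instance extend $v$ to $\R\times[-2,2]$ as $v(x,\pm 1)$ on the nose for $|y|\ge 1$, or simply use that $v$ is already defined on a neighborhood of $\R\times[-1,1]$ and only $\chi(y)v$ with $|y|<1+\text{(a bit)}$ matters. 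The point is that $\Psi$ is the identity outside $\A_2$, equals $\widetilde\phi$ on $\A_1$, and $\|\Psi-\mathrm{id}\|\le\|v\|\le\epsilon$ in the $C^0$-norm. The issue is that $\Psi$ need not be an embedding: $\chi(y)v(x,y)$ has a $y$-derivative involving $\chi'(y)v$, which is not small (only $\chi'$ times something small, and $\chi'$ can be as large as $\sim 1$ over the gap width, which is a fixed constant, times $\|v\|_{C^0}$ — wait, $\chi'$ times $\|v\|_{C^0}\le\epsilon$ is small). So in fact the $y$-partial of the perturbation is $\le\epsilon\cdot\max|\chi'|$, which is a \emph{constant times $\epsilon$}, hence small; the $x$-partial of the perturbation involves $\partial_x v$, which we do \emph{not} control. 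This is the crux.

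\medskip

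\textbf{Step 3: Handle the lack of $C^1$-control — the main obstacle.} The honest difficulty is that $\phi$ is only assumed $C^0$-close to the identity, not $C^1$-close, so the naive convex-combination / cutoff construction above does not obviously produce an embedding. The standard device (this is essentially Fathi's trick, as the authors allude to) is: do not cut off $\phi$ directly, but rather first \emph{displace and re-glue}. Concretely, use an isotopy from $\mathrm{id}$ to $\phi$ that is itself $C^0$-small — which exists because $\phi$ is $C^0$-small and isotopic to the identity, via e.g.\ an Alexander-type or Moser-free isotopy in the lift — and then conjugate $\phi$ by a fixed diffeomorphism that ``spreads out'' the annulus $\A_1$ across $\A_2$ so that the composition of $\phi$ with the inverse spreading becomes a map that is $C^0$-small \emph{and} can be cut off in a region where we only need to interpolate between $\phi$ and $\mathrm{id}$ across a collar whose radial width we may take to be $\asymp 1$, keeping the perturbation's derivatives in the radial direction bounded by a \emph{constant} times $\epsilon$; the tangential derivatives of $\phi$ are irrelevant because the cutoff only depends on the radial coordinate, so $\Psi$ restricted to each vertical segment $\{x\}\times[-3,3]$ is a $C^1$-small perturbation (in that one variable) of the corresponding vertical segment of $\widetilde\phi$, which is an embedded arc; an embedded-arc–to–embedded-arc deformation through the identity that is $C^0$-small and whose only uncontrolled derivatives are \emph{tangent} to the arcs being deformed still yields an ambient embedding, by a standard isotopy-extension argument applied fiberwise. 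I would make this precise by building $\psi$ as the time-one map of an explicitly written vector field $Y_t = \chi(y)\,(\text{the generating field of the }C^0\text{-small isotopy})$, cut off radially; since the isotopy is $C^0$-small the resulting flow stays $C^0$-close to the inclusion, and since $\chi$ depends only on $y$ the flow remains a diffeomorphism for all $t\in[0,1]$. Then $\psi:=$ time-one map works: it is supported in $\A_2$, $\|\psi\|\le C\epsilon$ with $C$ depending only on $\chi$, and $\psi|_{\A_1}=\phi$.

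\medskip

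\textbf{Step 4: The ``moreover'' clause.} If $\phi$ is already the identity outside a quadrilateral $I\times[-1,1]$ and maps it into $I\times[-2,2]$, then in the construction above everything can be taken to be the identity outside $I\times(-3,3)$: just use, in addition to the radial cutoff $\chi(y)$, a tangential cutoff supported in $I$, or rather observe that the generating vector field of the $C^0$-small isotopy is already supported (in $x$) inside $I$ by hypothesis, so the cut-off flow is automatically the identity outside $I\times[-3,3]$. I expect Step 3 — the passage from $C^0$-control of $\phi$ to an honest ambient embedding — to be the only real content; Steps 1, 2, 4 are bookkeeping with cutoffs and covering spaces.
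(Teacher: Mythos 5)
Your Step 3 is where the lemma actually lives, and as written it has two genuine gaps. First, you invoke ``an isotopy from $\mathrm{id}$ to $\phi$ that is itself $C^0$-small,'' justified only by ``an Alexander-type or Moser-free isotopy in the lift.'' No such construction is available from the hypotheses: $\phi$ is merely assumed isotopic to the identity and $C^0$-close to the inclusion, and producing from this a \emph{smooth isotopy of embeddings that stays $C^0$-small for all $t$} is a local-contractibility statement for the embedding space in the $C^0$-topology — a result of the same order of difficulty as the lemma itself (Alexander's trick does not apply to an embedding of a neighborhood of $\A_1$ into $\A_2$, and in any case produces only a topological isotopy). Second, even granting such an isotopy with generating field $X_t$, the time-one map of $\chi(y)X_t$ need not be $C^0$-close to the identity: $C^0$-smallness of the isotopy gives no bound on $|X_t|$, and for a \emph{time-dependent} field the trajectories of $\chi X_t$ are not reparametrizations of trajectories of $X_t$, so confinement of the latter to $\epsilon$-balls does not confine the former. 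Your ``fiberwise isotopy extension'' remark does not repair this, since applying isotopy extension on each vertical segment separately does not assemble into a global diffeomorphism with a uniform $C^0$-bound.

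The paper takes a different route that sidesteps both issues. It first straightens the two image curves $\phi(S^1\times\{\pm 1.5\})$ back to round circles by a $C^0$-small ambient diffeomorphism $\psi_1$; this curve-straightening step is the real content and is isolated as Lemma~\ref{lemma-curves-khanevsky} in the appendix. Then $\psi_2=\psi_1\phi$ is an honest diffeomorphism of $S^1\times[-1.5,1.5]$ fixing the boundary, and after a small $C^\infty$-perturbation it preserves the circle foliation near the boundary, i.e.\ has the shear form $(x,y)\mapsto(x+u(x,y),y)$ with $\|u\|$ small. Only \emph{then} does a radial cutoff work: $(x,y)\mapsto(x+\chi(y)u(x,y),y)$ is automatically a diffeomorphism because $1+\chi\,\partial_x u=(1-\chi)+\chi(1+\partial_x u)>0$ is a convex combination of positive quantities — the uncontrolled tangential derivative of $\phi$ is rendered harmless by first forcing the map into foliation-preserving form. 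Your Steps 1, 2 and 4 are fine as bookkeeping, but without a substitute for the curve-straightening step (or a proof of the two claims above) the argument does not close.
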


Lemma~\ref{lemma-smooth} will be proved in
Section~\ref{sec-smooth-extension-lemma-pf}.

\bigskip
\noindent {\bf Proof of Lemma~\ref{lemma-annuli}.}

As one can easily check using Proposition~\ref{prop-Moser}, we can
assume without loss of generality that the area form on
$\A=S^1\times [-3,3]$ is $\omega=dx\wedge dy$, where $x$ is the
angular coordinate along $S^1$ and $y$ is the coordinate along
$[-3,3]$. All norms and distances are measured with the Euclidean
metric on $\A$.

Denote $\A_+ := S^1\times [1,2]$, $\A_- := S^1\times [-2, -1]$.

Assume $\|\phi\| < \epsilon$. By Lemma~\ref{lemma-smooth}, there
exists $f\in \dif (\A_2)$ such that $\| f\|\leq C\epsilon$, and $f
= \phi$ on a neighborhood of $\A_1$. Denote $\Omega := f^*
\omega$. By \eqref{eqn-area-condition},
\begin{equation}
\label{eqn-areas-Omega-omega-equal} \int_{\A_+} \Omega =
\int_{\A_+} \omega,\ \int_{\A_-} \Omega = \int_{\A_-} \omega.
\end{equation}
Note that $\Omega$ coincides with $\omega$ on a neighborhood of
$\partial \A_+$ and $\partial\A_-$. Let us find $h\in \dif (\A_2)$
such that
\begin{itemize}

\item{} $\left. h\right|_{\A_1} = \identity$,

\item{} $h^* \Omega = \omega$,

\item{} $\| h\|\to 0$ as $\epsilon\to 0$.

\end{itemize}

Given such an $h$, we extend $f h$ by the identity to the whole of
$\A$. The resulting diffeomorphism of $\A$ is $C^0$-small (if
$\epsilon$ is sufficiently small), preserves $\omega$ and belongs
to $\dif(\A)$, hence (see e.g. \cite{Tsuboi}), also to $\cD (\A)$.
It may not be Hamiltonian but one can easily make it Hamiltonian
by a $C^0$-small adjustment on $\A\setminus \A_2$. The resulting
diffeomorphism $\psi \in \ham (\A)$ will have all the required
properties.

\bigskip
\noindent {\bf Preparations for the construction of $h$.}

Since on $\A_1$ the map $h$ is required to be identity, we need to
construct it on $\A_+$ and $\A_-$. We will construct $h_+:= \left.
h\right|_{\A_+}$, the case of $\A_-$ is similar. By a rectangle or
a square in $\A$ we mean the product of a connected arc in $S^1$
and an interval in $[-3,3]$.

Let us divide $\A_+ = S^1\times [1,2]$ into closed squares
$K_1,\ldots, K_N$, with a side of size
$r=\epsilon^{1/4}>3\epsilon$ (we assume that $\epsilon$ is
sufficiently small). Denote by $V$ the set of the vertices which
are not on the boundary and by $E$ the set of the edges which are
not on the boundary. Finally, denote by $\Gamma$ the 1-skeleton of
the partition (i.e. the union of all the edges).

For each $v\in V$ denote by $B_v (\delta)$ the open ball in $\A_+$
of radius $\delta>0$ with the center at $v$. Fix a small positive
$\delta_0 < r$ so that for $0<\delta<\delta_0$, the balls $B_v
(\delta)$, $v\in V$, are disjoint and each $B_v (\delta)$
intersects only the edges adjacent to $v$. Given such a $\delta$,
consider for each edge $e\in E$  a small open rectangle $U_e
(\delta)$ covering $e\setminus \big( e\cap \cup_{v\in V} B_v
(\delta)\big)$, so that
\begin{itemize}

\item{} $U_e (\delta) \cap B_v (\delta) \neq \emptyset$ if and
only if $v$ is adjacent to $e$.

\item{} $U_e (\delta)$ does not intersect any other edge apart
from $e$.

\item{} All the rectangles $U_e (\delta)$, $e\in E$, are mutually
disjoint.

\end{itemize}

\noindent Define a neighborhood $U(\delta)$ of $\Gamma$ by
$$U(\delta) =
\left( \cup_{v\in V} B_v (\delta) \right) \cup \left( \cup_{e\in
E} U_e (\delta)\right).$$

For each $\varepsilon_1 > \varepsilon_2>0$ we pick a cut-off
function $\chi_{\varepsilon_1, \varepsilon_2}: \R\to [0,1]$ which
is equal to $1$ on a neighborhood of $(-\varepsilon_2,
\varepsilon_2)$ and vanishes outside $(-\varepsilon_1,
\varepsilon_1)$. Finally, by $C_1, C_2,\ldots$ we will denote
positive constants independent of $\epsilon$.

The construction of $h_+$ will proceed in several steps.

\bigskip
\noindent {\bf Adjusting $\Omega$ on $\Gamma$.}

We are going to adjust the form $\Omega$ by a diffeomorphism
supported inside $U (\delta)$ to make it
equal to $\omega$ on $\Gamma$. One can first construct $h_1\in \dif (\A_+)$ supported in
$\cup_{v\in V} B_{v}(2\delta)$ such that $h_1^* \Omega = \omega$
on $\cup_{v\in V} B_v (\delta)$ for some $\delta<\delta_0$ (simply
using Darboux charts for $\Omega$ and $\omega$). Note that $\|
h_1\| < 2\delta$. Write $\Omega':= h_1^* \Omega$. For each $e\in
E$ we will construct a diffeomorphism $h_e$ supported in $U_e
(\delta)$ so that $h_e^* \Omega' = \omega$ on $l:=U_e (\delta)\cap
e$ (and thus on the whole $e$, since $\Omega'$ already equals
$\omega$ on each $B_v (\delta)$).

Without loss of generality, let us assume that $e$ does not lie on
$\partial \A_+$ (since $\Omega'$ already coincides with $\omega$
there) and that $U_e (\delta)$ is of the form $(a,b)\times
(-\delta, \delta)$. Write the restriction of $\Omega'$ on
$l=(a,b)\times 0$ as $\beta (x) dx\wedge dy$, $\beta (x) >0$.

Consider a cut-off function $\chi=\chi_{\delta,\delta/2}: \R\to
[0,1]$ and define a vector field ${\bf w} (x,y)$ on $U_e (\delta)$
by
$${\bf w}(x,y)= \chi (y) \log (\beta(x)) y \frac{\partial}{\partial y}.$$
Note that ${\bf w}=0$ on $l$ and has compact support in $U_e
(\delta)$ (the end-points of $l$ lie in the balls $B_v (\delta)$
on which $\Omega=\omega$ and thus $\beta=1$ near these endpoints).
Let $\varphi_t$ be the flow of ${\bf w}$. A simple calculation
shows that
$$\frac{d}{dt}\varphi_{t}^{\ast}\omega=\varphi_{t}^{\ast}L_{{\bf w}}\omega=\log(\beta(x))e^{t\log(\beta(x))}dx\wedge dy$$
at the point $\varphi_{t}((x,0))=(x,0)$. Therefore $\varphi_{1}^{*}\omega=\Omega'$ on $l$. Thus setting
$h_e := \varphi_1^{-1}$ we get that $h_e^* \Omega' = \omega$ on
$l$ and that $\| h_e\|\leq 2\delta$, because $h_e$ preserves the
fibers $x\times (-\delta, \delta)$. Set
$$h_2 := \prod_{e\in E} h_e.$$
Since the rectangles $U_e (\delta)$ are
pairwise disjoint, $h_2$ is supported in $U (\delta)$ and satisfies the conditions

\begin{itemize}
\item{} $h_2^* \Omega' = \omega$ on $\Gamma$.
\item{} $\|h_2\|\leq 2\delta$.
\end{itemize}

\noindent The diffeomorphism $h_3:=h_{2}h_{1}\in \dif(\A_+)$
satisfies $\|h_{3}\|\le 4\delta$ and
$$h_3^* \Omega = h_2^*\Omega' = \omega\ {\rm on}\ \Gamma.$$

\noindent Consider the area form $\Omega'':= h_3^* \Omega$. It
coincides with $\omega$ on the 1-skeleton $\Gamma$ and near
$\partial \A_+$. Moreover, $\int_{\A_+} \Omega'' = \int_{\A_+}
\Omega'$ and hence, by \eqref{eqn-areas-Omega-omega-equal},
\begin{equation}
\label{eqn-Omega-double-prime-omega}
\int_{\A_+} \Omega'' =
\int_{\A_+} \omega.
\end{equation}

\bigskip
\noindent {\bf Adjusting the areas of the squares}

In this paragraph we construct a $C^{0}$-perturbation $\rho
\omega$ of $\omega$ which has the same integral as $\Omega''$ on
each square $K_{i}$.

Making $\delta$ sufficiently small we can assume that $\|h_3\|
<\epsilon$. Recall that $r=\epsilon^{1/4}>3\epsilon$. Therefore
the image of one of the squares $K_{i}$ by $h_{3}$ contains a
square of area $(r-\epsilon)^{2}$ and is contained in a square of
area $(r+\epsilon)^{2}$. Hence,
$$\frac{(r-2\epsilon)^2}{r^2}\leq \frac{\int_{K_{i}} \Omega''}{\int_{K_{i}}
\omega} \leq \frac{(r+2\epsilon)^2}{r^2}.$$ Since $\epsilon/r =
\epsilon^{3/4}\to 0$ as $\epsilon\to 0$, we get that if $\epsilon$
is sufficiently small, there exists $C_1>0$ so that
\begin{equation}
\label{eqn-areas-Omega-double-prime-arbitrary-squares}
1-C_1\frac{\epsilon}{r}\leq \frac{\int_{K_{i}}
\Omega''}{\int_{K_{i}} \omega} \leq 1+ C_1\frac{\epsilon}{r}.
\end{equation}
Now set $s_i:=\int_{K_i} \Omega''$ and $t_i = s_i/r^2 -1$. By
\eqref{eqn-areas-Omega-double-prime-arbitrary-squares},
\begin{equation}
\label{eqn-t-i} | t_i|\leq C_1 \frac{\epsilon}{r} = C_1
\epsilon^{3/4}.
\end{equation}

For each $i$ we can choose a nonnegative function $\bar{\rho}_{i}$
supported in the interior of $K_{i}$ so that
$\int_{K_{i}}\bar{\rho}_{i}\omega=r^{2}$ and
\begin{equation}
\label{eqn-rho}
 ||\bar{\rho}_{i}||_{C^{2}}\le
C_{2}\epsilon^{-1/2} \end{equation} for some constant $C_{2}>0$
independent of $i$. Define a function $\varrho$ on $\A$ by
$$\varrho := 1+\sum_{i=1}^N t_i \bar{\rho}_i.$$
By \eqref{eqn-t-i} and \eqref{eqn-rho}, the function $\varrho$ is
positive and the form $\varrho \omega$ converges to $\omega$ (in
the $C^{0}$-sense) as $\epsilon$ goes to $0$.  Moreover, $\varrho$
is equal to $1$ on $\Gamma$ and the two area forms $\varrho
\omega$ and $\Omega''$ have the same integral on each $K_{i}$. By \eqref{eqn-Omega-double-prime-omega}, one has:
\begin{equation}\label{eqn-varrho-omega-omega} 
\int_{\A_+} \varrho\omega = \int_{\A_+}\Omega''=
\int_{\A_+} \omega.
\end{equation}

\bigskip
\noindent {\bf Finishing the construction of $h_+$: Moser's
argument.}

Let us apply Proposition~\ref{prop-Moser}, part (ii), to the forms
$\Omega''$ and $\varrho\omega$ on $\A_+$: these forms have the same integral over each $K_i$ and
coincide on $\Gamma$ and near the boundary of $\A_+$, therefore
there exists a diffeomorphism $h_4\in\dif (\A_+)$ which is the
identity on $\Gamma$ and satisfies $h_4^* \Omega'' = \varrho
\omega$. Since $h_4$ is the identity on $\Gamma$ and maps each
$K_i$ into itself, its $C^0$-norm is bounded by the diameter of
$K_i$, hence goes to $0$ with $\epsilon$.

Finally, apply Proposition~\ref{prop-Moser} to the forms $\omega$
and $\varrho\omega$ on $\A_+$: by \eqref{eqn-varrho-omega-omega},
their integrals over $\A_+$ are the same, they coincide on
$\partial\A_+$ and are $C^0$-close. Therefore there exists $h_5\in
\dif (\A_+)$ so that $h_5^* (\varrho\omega) = \omega$ and
\begin{equation}
\label{eqn-h5} \| h_5\|\to 0\ {\rm as}\ \epsilon\to 0.
\end{equation}
Then $h_+ := h_5 h_4 h_3$ is the required
diffeomorphism. This finishes the construction of $h$.

\bigskip
\noindent {\bf Final observation.}

Note that if $\phi = \identity$ outside a quadrilateral $I\times [-1,1]$
for some arc $I\subset S^1$, then $f$ can be chosen to have the
same property. In such a case we need to construct $h_+\in \dif
(\A_+)$ supported in $I\times [-3,3]$.

Let $J$ be the complement of the interval $I$ in the circle. The
partition of $\A_+$ into squares can be chosen so that it extends
a partition of $J\times [1,2]\subset \A_+$ into squares of the
same size. Going over each step of the construction of $h_+$
above, we see that, since $\Omega = \omega$ on $J\times [1,2]$,
each of the maps $h_1, h_2, h_3, h_4, h_5$ can be chosen to be
identity on each of the squares in $J\times [1,2]$, hence on the
whole $J\times [1,2]$. Therefore $h_+$, hence $h$, hence $\psi=fh$,
is the identity on $J\times [1,2]$. Moreover $\psi$ is automatically
Hamiltonian in this case.\Qed

\subsection{Proof of the smooth extension lemma}
\label{sec-smooth-extension-lemma-pf}

As in the proof of Lemma~\ref{lemma-annuli}, we assume that the
Riemannian metric on $\A=S^1\times [-3,3]$ used for the
measurements is the Euclidean product metric. We can also assume
that the neighborhood of $\A_1$ on which $\phi$ is defined is, in
fact, an open neighborhood of $\A':=S^1\times [-1.5, 1.5]$ and
that $\epsilon \ll 0.5$.

\bigskip
\noindent {\bf Proof of Lemma~\ref{lemma-smooth}.}

Applying Lemma~\ref{lemma-curves-khanevsky} (see the appendix by
M. Khanevsky below) to the two curves $S^1\times \{\pm 1.5\}$ and
their images under $\phi$ we can find $\psi_1\in \dif (\A)$,
supported in $S^1\times (-2,-1) \cup S^1\times (1,2)$, such that
$\psi_1$ coincides with $\phi^{-1}$ on the curves $\phi
(S^1\times \{\pm 1.5\})$. Moreover it satisfies $\|\psi_1\| <
C'\epsilon$. Define $\psi_2:=\psi_1\phi$. This map is defined on an open neighborhood of $\A'=S^1\times
[-1.5, 1.5]$ and has the following properties:

\begin{itemize}

\item{} The restriction of $\psi_2$ to $\A'$ is a diffeomorphism
of $\A'$. It is the identity on $\partial\A'$ and coincides with
$\phi$ on $\A_1 = S^1\times [-1,1]\subset \A'$.

\item{} $\|\psi_2\| < C''\epsilon$, where $C'':= C'+1$.

\end{itemize}

We are going to modify $\psi_2$ (by a $C^0$-small perturbation) to
make it the identity not only on $\partial \A'$ but on {\it an
open neighborhood of $\partial\A'$}. Then we will extend it by the
identity to a diffeomorphism of $\A$ with the required properties.

Since $\psi_2$ is the identity on $\partial \A'$, by
perturbing it slightly near $\partial\A'$ (in the $C^\infty$-norm)
we can assume that, in addition to the properties listed above,
near $\partial\A'$ the map $\psi_2$ preserves the foliation of
$\A$ by the circles $S^1\times y$. It means that for some
sufficiently small $r>0$ the restriction of $\psi_2$ to $S^1\times
[-1.5, -1.5+r] \cup S^1\times [1.5-r, 1.5]$ has the form
$$\psi_2: (x,y)\mapsto (x+ u(x,y), y),$$
for some smooth function $u$ such that $\| u\| < C''\epsilon$.
Choose a cut-off function $\chi= \chi_{1.5, 1.5-r}:\R\to [0,1]$
and define a map $\psi_3$ on $\A'$ as follows:
$$\psi_3:=\psi_2\ {\rm on}\ S^1\times [-1.5+r,
1.5-r],$$
$$\psi_3 (x,y):= (x+ \chi(y) u(x,y), y),\ {\rm when}\ |y|\geq
1.5-r.$$

\noindent We now consider the diffeomorphism $\psi$ which equals
$\psi_{3}$ on $\A'$ and the identity outside $\A'$. It coincides
with $\phi$ on $\A_1$ and satisfies $\|\psi\| < C''\epsilon$.
Note that if $\epsilon$ is sufficiently small, $\psi$
automatically belongs to the identity component $\dif (\A)$ (this
can be easily deduced, for instance, from \cite{EE1, EE2} or
\cite{Smale}). This finishes the construction of $\psi$ in the
general case.

Let us now consider the case, where $\phi = \identity$ outside a
quadrilateral $I\times [-1,1]$ and $\phi (I\times [-1,1])\subset
I\times [-2,2]$ for some arc $I\subset S^1$. Then, by
Lemma~\ref{lemma-curves-khanevsky}, we can assume that $\psi_1$ is
supported in $I\times [-3,3]$. Then $\psi_2$ is the identity
outside $I\times [-1.5, 1.5]$. When we perturb $\psi_2$ near
$\partial\A'$ to make it preserve the foliation by circles, we can
choose the perturbation to be supported in $I\times [-1.5,1.5]$.
Thus $u(x,y)$ would be $0$ outside $I\times [-1.5,1.5]$. This
yields that $\psi_3$, and consequently $\psi$, are the identity
outside $I\times [-3,3]$. \Qed

\bigskip

\section{Appendix by Michael Khanevsky:\\ An extension lemma for curves}\label{app}

\bigskip For a diffeomorphism $\phi$ of a compact surface with a
Riemannian distance $d$ we write $||\phi|| = \max d(x,\phi(x))$.
The purpose of this appendix is to prove the following extension
lemma which was used in
Section~\ref{sec-smooth-extension-lemma-pf} above.

\medskip
\noindent
\begin{lemma}
\label{lemma-curves-khanevsky}

Let $A := S^1\times [-1, 1]$ be an annulus equipped with the
Euclidean product metric. Set $L= S^1\times 0$. Assume that $\phi$
is a smooth embedding of an open neighborhood of $L$ in $A$, so
that $L$ is homotopic to $\phi (L)$ and $\|\phi\|\leq
\epsilon$ for some $\epsilon\ll 1$.

Then there exists a diffeomorphism $\psi\in \dif (A)$ such that
$\psi  = \phi$ on $L$ and $\|\psi\|< C'\epsilon$ for some
$C'>0$ independent of $\psi$.

Moreover, if $\phi = \identity$ outside some arc $I\subset L$ and $\phi
(I)\subset I\times [-1,1]$, then $\psi$ can be made the identity
outside $I\times [-1,1]$.

\end{lemma}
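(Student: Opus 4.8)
The plan is to reduce the problem to a normal form in two stages: first straighten the image curve $\phi(L)$ to a horizontal circle $S^1 \times \{c\}$ for a small $c$, then straighten it further to $L = S^1 \times \{0\}$, keeping track of $C^0$-sizes throughout. Concretely, since $\phi$ is a $C^0$-small embedding of a neighborhood of $L$ and $\phi(L)$ is homotopic to $L$, the curve $\phi(L)$ is a graph-like embedded circle contained in $S^1 \times [-\epsilon,\epsilon]$ that wraps once around the annulus; I would first observe that it can be written (after a $C^0$-small isotopy supported in $S^1 \times [-2\epsilon, 2\epsilon]$, say) as the graph $\{(x, \gamma(x))\}$ of a function $\gamma$ with $\|\gamma\| \le \epsilon$. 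Then the ``vertical shear'' map $\sigma(x,y) = (x, y - \chi(y)\gamma(x))$, where $\chi$ is a cut-off function equal to $1$ near $0$ and vanishing near $\pm 1$, is a diffeomorphism of $A$, is $C^0$-small (its displacement is bounded by $\|\gamma\| + \|\gamma'\|\cdot(\text{small}) \lesssim \epsilon$ once one absorbs derivative bounds — here one uses that a $C^0$-small embedding of a \emph{fixed} neighborhood has controlled first derivatives on the slightly smaller neighborhood, by interior elliptic-type/interpolation estimates, or one simply includes this in the hypothesis as is standard), and sends $\phi(L)$ onto $L$.

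Next I would arrange that $\psi$ agrees with $\phi$ \emph{on $L$ pointwise}, not merely as sets. After applying $\sigma$, the composition $\sigma \circ \phi$ maps $L$ to $L$, i.e. it restricts to a $C^0$-small (orientation-preserving) diffeomorphism $g$ of the circle $L$. I would extend $g$ to a $C^0$-small diffeomorphism $G$ of $A$ supported near $L$ — for instance, isotope $g$ to the identity through circle diffeomorphisms $g_t$ with $g_1 = g$, $g_0 = \mathrm{id}$, all $C^0$-small (possible since $g$ is $C^0$-close to the identity, using that $\mathrm{Diff}_+(S^1)$ is connected and the isotopy can be taken small), and set $G(x,y) = (g_{\rho(y)}(x), y)$ for a cut-off $\rho$ equal to $1$ near $y=0$ and $0$ near $y=\pm1$. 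Then $\psi := \sigma^{-1} \circ G$ has $\psi|_L = \sigma^{-1} \circ g = \phi|_L$ and $\|\psi\| \le \|\sigma^{-1}\| + \|G\| < C'\epsilon$. That $\psi \in \dif(A)$ (i.e. lies in the identity component and is supported in the interior) follows because it is built from $C^0$-small pieces supported away from $\partial A$; for the connectedness one can also invoke that a $C^0$-small diffeomorphism of an annulus is isotopic to the identity.

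For the ``moreover'' clause, one checks that every construction above is local over the base circle: if $\phi = \identity$ outside an arc $I$ and $\phi(I) \subset I \times [-1,1]$, then $\gamma$ is supported in $I$, so $\sigma$ is supported in $I \times [-1,1]$; likewise $g$ is the identity outside $I$, hence the isotopy $g_t$ and the extension $G$ can be chosen supported in $I \times [-1,1]$; thus $\psi$ is the identity outside $I \times [-1,1]$ as required.

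The main obstacle I expect is the bookkeeping of $C^0$-norms under composition and, more subtly, controlling the vertical shear $\sigma$: its displacement involves not just $\|\gamma\|$ but the slope $\gamma'$, so one must either extract a first-derivative bound on $\phi$ along $L$ from the hypothesis that $\phi$ is an embedding of a \emph{fixed} neighborhood (a compactness/interpolation argument giving $\|d\phi - \mathrm{id}\|$ small on the smaller set once $\|\phi\|$ is small — this is the only place where ``embedding of a neighborhood'' rather than ``embedding of $L$'' is essential), or replace the naive shear by a smoothing that averages $\gamma$ at scale $\sim\sqrt{\epsilon}$ to trade $C^0$-smallness of the map against $C^0$-smallness of the curve, exactly in the spirit of the square-size choice $r = \epsilon^{1/4}$ used in Lemma~\ref{lemma-annuli}. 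Getting this derivative control clean, uniformly, and compatibly with the support condition is the technical heart of the argument; everything else is routine.
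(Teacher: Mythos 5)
The central step of your plan --- writing $\phi(L)$ as a graph $\{(x,\gamma(x))\}$ ``after a $C^0$-small isotopy'' --- is exactly where the difficulty of the lemma lives, and you have not supplied it. A $C^0$-small embedded circle homotopic to $L$ need \emph{not} be a graph over $S^1$: within a band of height $\epsilon$ the curve can double back horizontally (a small ``Z'' of horizontal extent less than $\epsilon$), and this is fully compatible with $\|\phi\|\le\epsilon$ for an embedding of a fixed neighborhood of $L$. Producing an ambient diffeomorphism of controlled $C^0$-norm that straightens such a curve is precisely the content of the paper's proof: it first pushes $\phi(L)$ strictly above $L$, then pins the resulting curve $K$ back down to $L$ at an $\epsilon$-dense set of points $x_1,\dots,x_N$ by disjointly supported local moves along vertical rays, and then observes that, because the pinned curve lies in $\{y\ge 0\}$ and is embedded, the points $x_i$ occur on it in the same cyclic order as on $L$, so each arc $K_i$ between consecutive $x_i$ together with $[x_i,x_{i+1}]$ bounds a disk of diameter $O(\epsilon)$, which can be collapsed by a diffeomorphism of displacement $O(\epsilon)$. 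This combinatorial argument replaces graphicality entirely and never needs derivative information.

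Your two proposed fallbacks do not close the gap. First, there is no ``interior elliptic/interpolation'' estimate bounding $\|d\phi-\mathrm{id}\|$ in terms of $\|\phi-\mathrm{id}\|_{C^0}$, even for embeddings of a fixed neighborhood: small-amplitude, high-frequency oscillations give $C^0$-small embeddings with arbitrarily large derivatives, and nothing in the hypotheses controls $d\phi$. Second, mollifying $\gamma$ at scale $\sqrt{\epsilon}$ presupposes that $\gamma$ exists, i.e.\ that the curve is already a graph. (Incidentally, your worry about $\gamma'$ entering the displacement of the shear $\sigma(x,y)=(x,\,y-\chi(y)\gamma(x))$ is misplaced: its displacement is $|\chi(y)\gamma(x)|\le\|\gamma\|_{C^0}$, and it is a diffeomorphism as soon as $\|\gamma\|_{C^0}\,\|\chi'\|<1$, so \emph{if} the curve were a graph your shear would work with no derivative control at all; the sole genuine obstruction is graphicality.) The remaining ingredients of your plan --- extending the induced circle diffeomorphism of $L$ by a cut-off isotopy so that $\psi$ agrees with $\phi$ pointwise, and the locality over the base needed for the ``moreover'' clause --- are fine and match what the paper does in its final step.
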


\medskip
\noindent\begin{proof} We view the coordinate $x$
on $A$ along $S^1$ as a horizontal one, and the coordinate $y$
along $[-1,1]$ as a vertical one. If $a,b\in L$ are not antipodal,
we denote by $[a,b]$ the shortest closed arc in $L$ between $a$
and $b$.

The proof consist of a few steps. By $C_1, C_2, \ldots$ we will
denote some universal positive constants.

\bigskip
\noindent {\bf Step 1.} Shift the curve $\phi(L)$ by $3\epsilon$ upward by a diffeomorphism $\psi_{1}\in \dif(A)$ with $\| \psi_{1} \|\leq C_1\epsilon$, so that $K:=\psi_{1}(\phi(L))$ lies strictly above $L$ (see figure~\ref{1}).

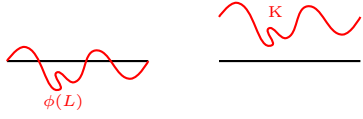
\begin{figure}\begin{center}
\psset{unit=.47cm}
\begin{pspicture}(0,.72)(11,4)
\psline[linewidth=.06](.5,2)(4.5,2)
\psline[linewidth=.06](6.5,2)(10.5,2)
\pscurve[linecolor=red,linewidth=.06](.5,2)(1,2.4)(2,1.2)(1.9,1.67)(2.4,1.4)(3,2.3)(4,1.5)(4.5,2)
\pscurve[linecolor=red,linewidth=.06](6.5,3.25)(7,3.65)(8,2.45)(7.9,2.92)(8.4,2.65)(9,3.55)(10,2.75)(10.5,3.25)
\rput{0}(2,.84){\red \begin{tiny} $\phi(L)$\end{tiny}}
\rput{0}(8,3.41){\red \begin{tiny} K\end{tiny}}
\end{pspicture}
\caption{Shifting $L$}\label{1}
\end{center}\end{figure}

\bigskip
\noindent {\bf Step 2.} Let $x_1,\ldots, x_N$ be points on $L$ chosen in a cyclic order so
that the distance between any two consecutive points $x_i$ and
$x_{i+1}$ is at most $\epsilon$ (here and further on, $i+1$ is
taken to be $1$, if $i=N$).

For each $i=1,\ldots, N$, consider a vertical ray originating at
$x_i$ and assume, without loss of generality, that it is
transversal to $K$ and that $K$ is parallel to $L$ near its
intersection points with the ray. Among the intersection points of
the ray with $K$ choose the closest one to $L$ and denote it by
$y_i$. Denote by $r_i$ the closed vertical interval between $x_i$
and $y_i$. Choose small pairwise disjoint open rectangles $U_i$,
of width at most $\epsilon/3$ and of height at most $4\epsilon$,
around each of the intervals $r_i$.

For each $i=1,\ldots, N$, it is easy to construct a diffeomorphism
$\psi_{2,i}$ supported in $U_i$ which moves a connected arc of
$K\cap U_i$ containing $y_i$ by a parallel shift downwards into an
arc of $L$ containing $x_i$ so that $\psi_{2,i} (K)$ lies
completely in $\{ y\geq 0\}$. Set $\psi_2:=\prod_{i=1}^N
\psi_{2,i}$. Clearly, $\|\psi_{2,i}\|\leq C_2\epsilon$ for each
$i$ and therefore, since the supports of all the diffeomorphisms
$\psi_{2,i}$ are disjoint, $\|\psi_2\|\leq C_2\epsilon$ as well. Set (see figure~\ref{2})
$$\tpsi: = \psi_2\psi_1\in \dif (A),\;\;\;\;\; \tK := \tpsi (\phi(L)).$$
Note that $\| \tpsi\|\leq C_3\epsilon$.

\bigskip
\noindent {\bf Step 3.} Note that the points $x_i$, $i=1,\ldots,N$, lie on $\tK$ and that
$$\tK\subset \{ y\geq 0\}.$$ An easy topological argument shows that
in such a case, since the points $x_i$ lie on $L$ in cyclic order,
they also lie in the same cyclic order on $\tK$.

For each $i$ there are two arcs in $\tK$ connecting $x_i$ and
$x_{i+1}$ -- denote by $K_i$ the one homotopic with fixed
endpoints to the arc $[x_i, x_{i+1}]\subset L$. Since the points
$x_i$ lie on $\tK$ in the same cyclic order as on $L$, we see that
$K_1,\ldots,K_N$ are precisely the closures of the $N$ open arcs
in $\tK$ obtained by removing the points $x_1,\ldots,x_N$ from
$\tK$.

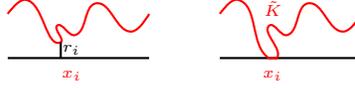
\begin{figure}\begin{center}
\psset{unit=.47cm}
\begin{pspicture}(0,.72)(10,4)
\psline[linewidth=.06](.5,2)(4.5,2)
\pscurve[linecolor=red,linewidth=.06](.5,3.25)(1,3.65)(2,2.45)(1.9,2.92)(2.4,2.65)(3,3.55)(4,2.75)(4.5,3.25)
\psline[linewidth=0.06](2,2.45)(2,2)
\rput{0}(2.31,2.24){\begin{tiny}$r_{i}$\end{tiny}}
\rput{0}(2.31,1.5){\red \begin{tiny}$x_{i}$\end{tiny}}
\psline[linewidth=.06](6.5,2)(10.5,2)
\pscurve[linecolor=red,linewidth=.06](6.5,3.25)(7,3.65)(7.89,2)(8,2)(8.11,2)(7.9,2.92)(8.4,2.65)(9,3.55)(10,2.75)(10.5,3.25)
\rput{0}(8,3.41){\red \begin{tiny}$\tilde{K}$\end{tiny}}
\rput{0}(8,1.5){\red \begin{tiny}$x_{i}$\end{tiny}}
\end{pspicture}
\caption{$\tilde{K}$ coincides with $L$ near $x_{i}$}\label{2}
\end{center}\end{figure}

Let $B_{i}$ be the open set bounded by $K_{i}$ and $[x_{i},x_{i+1}]$ (see figure~\ref{3}). The $B_{i}$'s are disjoint and have diameter at most $C_{4} \epsilon$. Let $B_{i}'$ be disjoint open neighborhoods of the $B_{i}$'s of diameter at most $C_{5}\epsilon$. Now for each $i$ one can easily find a
diffeomorphism $\psi_{3,i}\in \dif (B'_i)$ such that
$\|\psi_{3,i}\|\leq C_5\epsilon$ and $\psi_{3,i} (K_i) = [x_i,
x_{i+1}]$. Set $\psi_3:=\prod_{i=1}^N \psi_{3,i}$. Since the supports of all
$\psi_{3,i}$ are pairwise disjoint we get that $\|\psi_3\|\leq
C_5\epsilon$.

\bigskip
\noindent {\bf Step 4.} Define $\psi_4 := \psi_3\tpsi=\psi_3\psi_2\psi_1$. Clearly,
$\psi_4\in \dif (A)$ and $\|\psi_4\|\leq C_6 \epsilon$. Recall
that for each $i$ we have $\psi_3 (K_i) = [x_i, x_{i+1}]$ and that
each $K_i$ is the shortest arc between $x_i$ and $x_{i+1}$ in $\tK
= \psi_2\psi_1 (L)$. Thus $\psi_4$ maps $K$ into $L$. The diffeomorphism $\psi_{4}^{-1}$ satisfy $\psi_{4}^{-1}(L)=\phi(L)$. We now obtain easily the required $\psi$ by a $C^{0}$-small perturbation of $\psi_{4}^{-1}$.\end{proof}

\begin{figure}\begin{center}
\psset{unit=.47cm}
\begin{pspicture}(0,.72)(4,4)
\psline[linewidth=.06](.5,2)(4.5,2)
\pscurve[linecolor=red,linewidth=.06](.39,3)(.89,2)(1,2)(1.11,2)(.9,2.92)(1.4,2.65)(2,3.55)(3,2.75)(3.5,3.25)(4.01,2)(4.12,2)(4.16,2)(4.5,2.7)
\rput{0}(2.7,2.29){\red \begin{tiny} $B_{i}$ \end{tiny}}
\rput{0}(1.05,1.5){\red \begin{tiny} $x_{i}$ \end{tiny}}
\rput{0}(4.07,1.5){\red \begin{tiny} $x_{i+1}$ \end{tiny}}
\end{pspicture}
\caption{The open set $B_{i}$}\label{3}
\end{center}\end{figure}
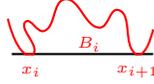

\bigskip
\begin{small}
\begin{tabular}{lll}
Michael Entov & & Leonid Polterovich\\
Department of Mathematics & & School of Mathematical Sciences\\
Technion - Israel Institute of Technology & & Tel Aviv University\\
Haifa 32000, Israel & & Tel Aviv 69978, Israel\\
entov@math.technion.ac.il & & polterov@post.tau.ac.il\\
 & & \\
Pierre Py & & Michael Khanevsky\\
Department of Mathematics & & School of Mathematical Sciences\\
University of Chicago & & Tel Aviv University\\
Chicago, Il 60637, USA & & Tel Aviv 69978, Israel\\
pierre.py@math.uchicago.edu & & khanev@post.tau.ac.il\\    
\end{tabular}
\end{small}

\end{document}